\newenvironment{rezabib}
  {\bibdiv\biblist\setupbib}
  {\endbiblist\endbibdiv}
  \def\setupbib{\catcode`@=\active}
\def\gatherkey#1#2{\gatherkeyaux{#1}#2\gatherkeyaux}
\def\gatherkeyaux#1#2,#3\gatherkeyaux{\bib{#2}{#1}{#3}}  
\newcommand{\C}{{\mathbb{C}}}
\renewcommand{\Re}{{\mathfrak{Re}}}
\renewcommand{\Im}{{\mathfrak{Im}}}
 \DeclareMathOperator{\modd}{mod}
\newcommand{\g}{\gamma}
\newtheorem{theorem}{Theorem}[section]
 \newtheorem{corollary}[theorem]{Corollary}
 \newtheorem{lemma}[theorem]{Lemma}
 \newtheorem{proposition}[theorem]{Proposition}
 \newtheorem{defn}[theorem]{Definition}
  \theoremstyle{remark}
\numberwithin{equation}{section}
\begin{document}

\title[Counting Zeros of the Riemann Zeta Function]{Counting Zeros of the Riemann Zeta Function}

\author[E. Hasanalizade]{Elchin Hasanalizade}
\address{Department of Mathematics and Computer Science\\
University of Lethbridge\\
4401 University Drive\\
Lethbridge, Alberta\\
T1K 3M4 Canada}
\email{e.hasanalizade@uleth.ca}

\author[Q. Shen]{Quanli Shen}
\address{Department of Mathematics and Computer Science\\
University of Lethbridge\\
4401 University Drive\\
Lethbridge, Alberta\\
T1K 3M4 Canada}
\email{quanli.shen@uleth.ca}

\author[P.J. Wong]{Peng-Jie Wong}
\address{National Center for Theoretical Sciences\\
No. 1, Sec. 4, Roosevelt Rd., Taipei City, Taiwan}
\email{pengjie.wong@ncts.tw}

\thanks{This research was supported by the NSERC Discovery grants RGPIN-2020-06731 of Habiba Kadiri and  RGPIN-2020-06032 of Nathan Ng. P.J.W. is currently an NCTS postdoctoral fellow; he was supported by a PIMS postdoctoral fellowship and the University of Lethbridge during part of this research.}

\begin{abstract}
In this article, we show that
$$
\left| N (T)  - \frac{T}{  2 \pi} \log \left( \frac{T}{2\pi e}\right)  \right|
 \le    0.1038  \log T + 0.2573  \log\log T + 9.3675
$$
where  $N(T)$ denotes the number of non-trivial zeros $\rho$, with $0<\Im(\rho) \le T$, of the Riemann zeta function. This improves the previous result of Trudgian for sufficiently large $T$.  The improvement comes from the use of various subconvexity bounds and ideas  from the work of Bennett \emph{et al.} on counting zeros of Dirichlet $L$-functions.
\end{abstract}

\subjclass[2010]{11M06}

\keywords{Riemann zeta function, explicit formulae}


\maketitle

\section{Introduction}
Let $\zeta(s)$ be the Riemann zeta function defined by 
\[
\zeta (s) = \sum_{n=1}^{\infty} \frac{1}{n^s},
\]
for $\Re(s)>1$, which has an analytic continuation to a meromorphic function on $\Bbb{C}$ with only a simple pole at $s = 1$. The study of zeros of $\zeta(s)$ is an important topic in number theory. In this article, we shall estimate the number of non-trivial zeros $\rho=\beta+i\gamma$, with $0<\gamma \le T$, of $\zeta(s)$. For $T\geq 0$,   we set 
\[
N(T) = \# \{ \rho \in  \C  \mid  \zeta(\rho) =0,\  0  <\beta <  1, \ 0<\gamma \leq T\}.
\]

Before stating our results, we shall note that the study of $N(T)$ has a long history. Indeed, for $T\ge T_0$, writing 
\begin{equation}\label{his-bound}
\left| N (T)  - \frac{T}{   2 \pi} \log \left( \frac{T}{2\pi e}\right)  \right|
 \le   C_1 \log T  + C_2   \log\log T  + C_3,
\end{equation}
we have the following table summarising the progress that has been made. 

\begin{table}[htbp] 
\centering
\begin{tabular}{ |c||c|c|c|c|   } 
 \hline  
  &  $C_1$ & $C_2$ & $C_3$ & $T_0$ \\ 
 \hline
 Von Mangoldt \cite{vMo05} (1905) & 0.4320   & 1.9167  &  13.0788  &  28.5580     \\ 
 \hline
 Grossmann \cite{Gr13} (1913) & 0.2907   & 1.7862 &  7.0120 &  50  \\ 
 \hline 
 Backlund \cite{Ba18} (1918)  & 0.1370   & 0.4430  & 5.2250 & 200 \\ 
 \hline 
 Rosser \cite{Ro41} (1941)  & 0.1370   & 0.4430  & 2.4630 & 2\\ 
 \hline   
 Trudgian \cite{Tr14-2} (2014)  & 0.1120   & 0.2780  & 3.3850 & $e$\\ 
 \hline 
 Corollary \ref{main-thm}   & 0.1038   & 0.2573  & 9.3675 & $e$\\ 
 \hline  
\end{tabular}
   \caption{Explicit bounds for $N(T)$ in \eqref{his-bound}}\label{table0} 
\end{table}

The importance of explicit bounds for $N(T)$ comes from the fact that they are crucial for estimating sums over zeros of $\zeta(s)$, and all the best known bounds for $\pi(x)$ and $\psi(x)$ rely on them (see, e.g., \cite{FK15}).

In this article, we prove the following general result for $N(T)$ with explicit dependence on the given bounds for $\zeta(s)$ on  the both $\frac{1}{2}$-line and 1-line. 

\begin{theorem}\label{main-thm-0}
Let $c,r,\eta$ be positive real numbers satisfying 
$$
-\frac{1}{2}<c-r<1-c < -\eta  <\frac{1}{4} \le \delta:=2c- \sigma_1 -\frac{1}{2} < \frac{1}{2} < 1+\eta < \sigma_1 :=c+ \frac{(c-1/2)^2}{r}< c+r
$$
and $\theta_{1 + \eta} \leq 2.1$, where $\theta_y$ is defined in \eqref{def-theta}. Let $c_1,c_2,k_1,k_3\ge 0$, $k_2\in [0,\frac{1}{2}]$ and $t_0, t_1\ge e$ such that for $t\ge t_0$,
\begin{equation}\label{cond-1}
|\zeta (1+ it )| \leq c_1 (\log t)^{c_2},
\end{equation}
and for $t\ge t_1$,
\begin{equation}\label{cond-2}
|\zeta (\tfrac{1}{2} + it)| \leq k_1 t^{k_2} (\log t)^{k_3}.
\end{equation}
Let $T_0\ge e$ be fixed. Then for any $T\ge T_0$, we have
\begin{equation}\label{main-bound-0}
\left| N (T)  - \frac{T}{   2 \pi} \log \left( \frac{T}{2\pi e}\right) +\frac{1}{ 8 } \right|
 \le   C_1 \log T  +C_2   \log\log T  +C_3,
\end{equation}
where  $C_1=C_1(c,r,\eta;k_2)$, $C_2=C_2(c,r,\eta;c_2,k_3) $, $C_3=C_3(c,r,\eta;c_1,c_2,t_0 ,k_1,k_2,k_3,t_1;T_0)$ are defined in \eqref{def-Cj}, \eqref{def-C1}, \eqref{def-C2}, and \eqref{def-C3}, and some admissible values of $C_1$, $C_2$, and $C_3$ are recorded in Table \ref{table2} in Section \ref{final}.
\end{theorem}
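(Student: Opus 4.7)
\emph{Proof proposal.} The plan is to follow the classical Riemann--von Mangoldt strategy via Backlund's trick, inserting the subconvexity bounds \eqref{cond-1} and \eqref{cond-2} to sharpen the treatment of $\arg\zeta$ on the critical line. I apply the argument principle to the completed zeta function $\xi(s) = \tfrac{1}{2}s(s-1)\pi^{-s/2}\Gamma(s/2)\zeta(s)$ on a rectangle with vertical sides at abscissae $c$ and $1-c$ and horizontal sides at heights $0$ and $T$; the functional equation $\xi(s)=\xi(1-s)$ together with Schwarz reflection expresses $N(T)$ in terms of the change of $\arg\xi$ along the right half of the boundary. Writing $\xi = \gamma\cdot\zeta$ with $\gamma(s) = \tfrac{1}{2}s(s-1)\pi^{-s/2}\Gamma(s/2)$, an explicit Stirling expansion of $\arg\gamma$ produces the main term $\tfrac{T}{2\pi}\log(T/(2\pi e))$ together with the constant contribution that gives rise to the $+\tfrac{1}{8}$ in \eqref{main-bound-0}, with an easily controlled remainder.

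It remains to bound $\arg\zeta$ along the horizontal segment from $c+iT$ to $\tfrac{1}{2}+iT$; at the endpoint $c+iT$ the hypothesis $c>1+\eta$ places us in the region of absolute convergence and a direct Dirichlet series estimate suffices, which is where the quantity $\theta_{1+\eta}$ from \eqref{def-theta} intervenes. Backlund's observation then gives $|\arg\zeta(\tfrac{1}{2}+iT) - \arg\zeta(c+iT)| \le \pi(N+1)$, where $N$ counts the sign changes of $\sigma\mapsto\Re\zeta(\sigma+iT)$ on $[\tfrac{1}{2},c]$. Since $\Re\zeta(s) = \tfrac{1}{2}(\zeta(s)+\overline{\zeta(\bar s)})$, the function $g(z) := \tfrac{1}{2}\bigl(\zeta(c+iT+z) + \overline{\zeta(c+iT+\bar z)}\bigr)$ is analytic in $z$, agrees with $\Re\zeta(\sigma+iT)$ on the real axis, and each sign change produces a real zero of $g$ within $|z|\le c-\tfrac{1}{2}$. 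Jensen's inequality on $|z|\le r$ then yields
\begin{equation*}
N\,\log\frac{r}{c-\tfrac{1}{2}} \;\le\; \frac{1}{2\pi}\int_0^{2\pi}\log|g(re^{i\theta})|\,d\theta \;-\; \log|g(0)|,
\end{equation*}
reducing the task to an upper bound for $|g|$ on the circle $|z|=r$ together with the easy lower bound $|g(0)|=|\Re\zeta(c+iT)|\gg 1$ (again via absolute convergence).

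The upper bound on $|g(re^{i\theta})|$ is obtained by a Phragm\'en--Lindel\"of convexity argument, interpolating between \eqref{cond-1} on the $1$-line and \eqref{cond-2} on the $\tfrac{1}{2}$-line, with the functional equation handling any points of the circle of abscissa $<\tfrac{1}{2}$. The intermediate abscissa $\sigma_1 = c+(c-\tfrac{1}{2})^2/r$ and the exponent $\delta = 2c-\sigma_1-\tfrac{1}{2}$ arise from optimizing this interpolation, explaining the constraints $\tfrac14\le \delta < \tfrac12$, $1+\eta<\sigma_1<c+r$, and $\theta_{1+\eta}\le 2.1$. Assembling everything into the Jensen inequality produces a bound on $N$ of the form $C_1'\log T + C_2'\log\log T + C_3'$ with explicit constants, and combining this with the Stirling contribution yields the final inequality. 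The principal difficulty is not conceptual but computational: each step (Stirling's remainder, the convexity interpolation, the Jensen denominator) must be made fully quantitative, and the parameters $c,r,\eta,\sigma_1$ must be optimized simultaneously to minimise the resulting coefficients. The leading coefficient $C_1$ is driven by the subconvexity exponent $k_2$ combined with the geometric factor $\log(r/(c-\tfrac{1}{2}))$ from the Jensen bound, which is why sharper inputs in \eqref{cond-2} translate directly into improvements over Trudgian's value $0.1120$; the admissible tuples in Table \ref{table2} then emerge from the numerical optimization.
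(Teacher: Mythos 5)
Your high-level plan is recognisably the one the paper follows (argument principle on $\xi$, Stirling for the gamma contribution, Backlund's trick plus Jensen, Phragm\'en--Lindel\"of with subconvexity inputs), but there is a genuine gap at the centre of your Jensen step, and several of the roles you assign to the parameters are misattributed.

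The critical gap is the lower bound ``$|g(0)|=|\Re\zeta(c+iT)|\gg 1$ via absolute convergence.'' For $\sigma=c>1$ absolute convergence gives $|\zeta(c+iT)|\ge\zeta(2c)/\zeta(c)>0$, but it gives nothing for $\Re\zeta(c+iT)$: one has $\Re\zeta(c+iT)=1+\sum_{n\ge 2}n^{-c}\cos(T\log n)$, and for $c$ close to $1$ (the paper takes $c\approx 1.00001$) the tail can overwhelm the leading $1$ and make $\Re\zeta(c+iT)$ arbitrarily close to zero or negative. So $\log|g(0)|$ is not bounded below uniformly in $T$, and Jensen applied to your $g$ gives no usable bound on the number of sign changes. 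This is precisely the obstruction that the paper's auxiliary function $f_N(s)=\tfrac12\bigl(((s+iT-1)\zeta(s+iT))^N+((s-iT-1)\zeta(s-iT))^N\bigr)$ and the $N$-th-power/subsequence device of Proposition~\ref{Je-bd} are designed to circumvent: writing $w=(c+iT-1)\zeta(c+iT)$, one has $f_N(c)=|w|^N\cos(N\arg w)$, and along a subsequence $N_m$ with $N_m\arg w\to 0\pmod{2\pi}$ the quantity $\tfrac{1}{N_m}\log|f_{N_m}(c)|$ tends to $\log|w|$, which \emph{is} bounded below by $\log\bigl(\sqrt{(c-1)^2+T^2}\,\zeta(2c)/\zeta(c)\bigr)$; this is where the term $\log\tfrac{\zeta(c)}{\zeta(2c)}$ in $\tilde C_3$ comes from. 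Without the power trick your argument does not close. (The factor $(s-1)$ in $f_N$ also keeps the quantity entire, which you would otherwise have to handle separately.)

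Two secondary points. First, $\sigma_1=c+(c-1/2)^2/r$ and $\delta=2c-\sigma_1-\tfrac12$ do not come from optimising the Phragm\'en--Lindel\"of interpolation; they come from the geometry of Backlund's trick (Proposition~\ref{BT}), where $\sigma_1$ is the right-hand abscissa of the rectangle and the correction term $E(T,\delta)$ appears, and the constraint $\tfrac14\le\delta<\tfrac12$ is what makes Lemma~\ref{EK-bd-final} applicable. Relatedly, the paper's contour has vertical sides at $\sigma_1$ and $1-\sigma_1$, not at $c$ and $1-c$. Second, $\theta_{1+\eta}$ does not intervene at the endpoint $c+iT$: it is the angle on the circle $|z-c|=r$ at which the abscissa crosses $1+\eta$, and the condition $\theta_{1+\eta}\le 2.1$ is used when estimating $\int_0^\pi F_{c,r}(\theta)\,d\theta$ piecewise over the ranges cut out by $\theta_{1+\eta},\theta_1,\theta_{1/2},\theta_0,\theta_{-\eta}$, in particular in the zeta-integral bound of Lemma~\ref{zeta-int}. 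You should rework the Jensen step around $f_N$ and Proposition~\ref{Je-bd}, and correct the role of $\sigma_1,\delta,\theta_{1+\eta}$, before the proposal can be considered a proof.
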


As a consequence, we obtain an  explicit  estimate for $N(T)$ as follows.

\begin{corollary}\label{main-thm}
For any $T\ge e$, we have
\begin{equation}\label{main-bound-1}
\left| N (T)  - \frac{T}{   2 \pi} \log \left( \frac{T}{2\pi e}\right)  \right|
 \le    0.1038  \log T + 0.2573  \log\log T + 9.3675.
\end{equation}
\end{corollary}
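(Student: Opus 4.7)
\textbf{Proof proposal for Corollary \ref{main-thm}.} The plan is to deduce the corollary as a direct specialization of Theorem \ref{main-thm-0}: one selects explicit admissible values of the parameters $c,r,\eta$ and inserts numerical subconvexity bounds for $\zeta$ on the $1$-line and the $\tfrac12$-line satisfying \eqref{cond-1} and \eqref{cond-2}. Since the constant $\tfrac{1}{8}$ on the left of \eqref{main-bound-0} satisfies $\tfrac18 < 9.3675 - C_3$, it can be absorbed into the additive constant, so only $C_1$, $C_2$, and $C_3$ need to be matched (or beaten).

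First I would fix explicit inputs from the literature. For \eqref{cond-1} I would use a Vinogradov--Korobov-type explicit bound (for instance from Ford or Patel/Trudgian) of the shape $|\zeta(1+it)| \le c_1 (\log t)^{c_2}$ with, say, $c_2 = \tfrac{2}{3}$ and a small admissible $c_1$, holding for all $t \ge t_0$ with some explicit $t_0$. For \eqref{cond-2} I would invoke a recent explicit subconvexity estimate (Hiary or subsequent sharpenings) of the shape $|\zeta(\tfrac12+it)| \le k_1 t^{k_2}(\log t)^{k_3}$ with $k_2$ slightly above $\tfrac{1}{6}$ (the Weyl exponent) and admissible $k_1,k_3$, for $t \ge t_1$. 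These are exactly the hypotheses of Theorem \ref{main-thm-0}, so the output of that theorem is available once the parameters $(c,r,\eta)$ are chosen.

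Next I would optimize $(c,r,\eta)$ inside the admissible region
$$
-\tfrac12 < c-r < 1-c < -\eta < \tfrac14 \le \delta < \tfrac12 < 1+\eta < \sigma_1 < c+r,
$$
where $\sigma_1 = c + (c-\tfrac12)^2/r$ and $\delta = 2c - \sigma_1 - \tfrac12$, subject also to $\theta_{1+\eta} \le 2.1$ (which is a one-variable numerical check using the definition in \eqref{def-theta}). The target is $C_1 \le 0.1038$, which controls the leading term; since $C_1$ depends only on $c,r,\eta,k_2$, and since $k_2$ is essentially the Weyl exponent, I expect a narrow numerical window in $(c,r,\eta)$-space. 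Having fixed $(c,r,\eta)$, the values of $C_2$ and $C_3$ follow from \eqref{def-C2} and \eqref{def-C3}; the former needs to be $\le 0.2573$, and the latter is then determined so one just verifies $C_3 \le 9.3675$ for $T_0 = e$.

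The main obstacle is the joint optimization: the constraint $\delta \ge \tfrac14$ pushes $(c-\tfrac12)^2/r$ to be small (so $r$ large relative to $c-\tfrac12$), while shrinking $\eta$ or pushing $\sigma_1$ closer to $1$ tightens the bound $\theta_{1+\eta}\le 2.1$. Balancing these against the explicit formulae for $C_1,C_2,C_3$ (which are monotone in different directions in each variable) will likely be done by a small numerical search, after which the verification reduces to substituting the chosen $(c,r,\eta)$ and the explicit $(c_1,c_2,t_0,k_1,k_2,k_3,t_1)$ into \eqref{def-C1}--\eqref{def-C3} and comparing with the stated constants. A final small-$T$ check on the interval $[e, T_0]$ (if the optimization forces one to take $T_0$ slightly larger than $e$) would be handled by a direct computation using the classical van-Mangoldt-style bound, as in Trudgian's approach.
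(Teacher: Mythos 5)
Your high-level strategy (specialize Theorem \ref{main-thm-0} and then handle a residual small-$T$ range) is the right skeleton, but two of your key steps would fail as written.

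First, you assume $T_0$ can be taken equal to $e$ or ``slightly larger,'' so that the residual interval to check is short. In fact the constant $C_3$ in \eqref{def-C3} contains the $T_0$-dependent terms $\kappa_3(T_0)$, $\frac{1}{25T_0}$, and the $E(T_0,\delta)$ contribution, all of which grow as $T_0$ shrinks; to push $C_3$ down near $9.37$ the paper must take $T_0 = 30\,610\,046\,000$. So the residual range is the enormous interval $[e,\, 3.06\times 10^{10}]$, not a small neighbourhood of $e$.

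Second, your proposed fix for that interval --- ``a direct computation using the classical van-Mangoldt-style bound'' --- does not work over a range of size $3\times 10^{10}$: the von~Mangoldt-type estimate (see Table~\ref{table0}) has much larger leading coefficients and would greatly exceed the target $0.1038\log T + 0.2573\log\log T + 9.3675$ already at $T\approx 10^{10}$. The paper instead appeals to the rigorous numerical verification of zeros of $\zeta$ (Platt's database, cited via \cite{LMFDB}), which gives $|S(T)|\le 2.5167$ for $0\le T\le 30\,610\,046\,000$ \eqref{bd-S}. Combining this with the identity \eqref{formula-S}, which expresses $N_{\mathbb{Q}}(T) - \frac{T}{\pi}\log(\frac{T}{2\pi e}) + \frac14$ in terms of $S(T)$, $g(T)$, and $\arctan(2T)$, together with the bound $|g(T)|\le \frac{1}{25T}$ from \eqref{bd-g}, yields $\left|N(T) - \frac{T}{2\pi}\log(\frac{T}{2\pi e}) + \frac18\right| \le 2.5167 + \frac{1}{50e} + 1$ on the residual range, which is comfortably below the claimed bound. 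That appeal to computed zero data is the essential ingredient your proposal is missing.

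Two smaller points. Your claim that $\tfrac18 < 9.3675 - C_3$ should be verified numerically rather than asserted; with the paper's optimized $C_3 \approx 9.3674$ this inequality actually fails, so the $\tfrac18$ must be absorbed together with the slack from $C_1,C_2$ and from the small-$T$ argument, not by the additive constant alone. And the explicit inputs you propose are not the ones used: there is no usable explicit Vinogradov--Korobov bound with $c_2 = \tfrac23$ for $|\zeta(1+it)|$ in the literature; the paper uses Patel's $|\zeta(1+it)|\le \log t$ (so $c_1=c_2=1$, $t_0=3$) and the corrected Hiary bound $(k_1,k_2,k_3,t_1)=(0.77,\tfrac16,1,3)$ for the half-line.
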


Note that $C_1 = 0.1038$ is the smallest value that can be obtained by our argument and computation, and one can make $C_2$ and $C_3$ smaller at expense of larger $C_1$.

Let  $S(T) = \frac{1}{\pi} \Delta_L \arg\zeta(s)$, where  $L$ denotes the straight line from $2$ to $2 + iT$ and then to $\frac{1}{2} + iT$. We also have the following theorem concerning the argument of $\zeta(s)$ along the critical line.

\begin{theorem}\label{main-thm-S(T)} In the notation and assumptions of Theorem \ref{main-thm-0}, for any $T\ge T_0$, we have
\begin{equation}\label{main-bound-2}
|S(T)|\le C_1 \log T +  C_2 \log\log T + C'_3,
\end{equation}
where  $C_1=C_1(c,r,\eta;k_2)$, $C_2=C_2(c,r,\eta;c_2,k_3) $, $C'_3=C'_3(c,r,\eta;c_1,c_2,t_0 ,k_1,k_2,k_3,t_1;T_0)$ are defined in \eqref{def-Cj}, \eqref{def-C1}, \eqref{def-C2}, and \eqref{def-D3}, and some admissible values of $C_1$, $C_2$, and $C'_3$ are recorded in Table \ref{table2} in Section \ref{final}. 
\end{theorem}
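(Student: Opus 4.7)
The plan is to derive Theorem \ref{main-thm-S(T)} as a direct consequence of Theorem \ref{main-thm-0} together with an explicit form of Stirling's expansion for the Riemann--Siegel theta function $\theta(T) = \arg\Gamma(\tfrac14+\tfrac{iT}{2}) - \tfrac{T}{2}\log\pi$. I would start from the exact Riemann--von Mangoldt identity
\[
N(T) = \frac{\theta(T)}{\pi} + 1 + S(T),
\]
obtained by applying the argument principle to the completed zeta function $\xi(s) = \tfrac12 s(s-1)\pi^{-s/2}\Gamma(s/2)\zeta(s)$ over a standard rectangle and using the functional equation $\xi(s)=\xi(1-s)$ to fold the contour onto the path $L$ from $2$ to $2+iT$ to $\tfrac12+iT$ (so that the $\arg\zeta$ part on the left half of the rectangle is exchanged for the same quantity on the right half).

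Second, I would plug in an explicit Stirling expansion: for every $T\ge e$,
\[
\theta(T) = \frac{T}{2}\log\frac{T}{2\pi} - \frac{T}{2} - \frac{\pi}{8} + R(T),\qquad |R(T)|\le \frac{A}{T},
\]
for a numerically explicit constant $A$ (the two-term expansion gives $A$ slightly above $1/48$, and $A=1/24$ is comfortable for $T\ge e$). Rearranging, one obtains the exact relation
\[
S(T) = \Bigl(N(T) - \frac{T}{2\pi}\log\frac{T}{2\pi e} + \frac{1}{8}\Bigr) - 1 - \frac{R(T)}{\pi},
\]
which incidentally explains the appearance of the summand $+\tfrac18$ inside the absolute value of \eqref{main-bound-0}.

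Finally, the triangle inequality together with Theorem \ref{main-thm-0} gives at once
\[
|S(T)| \le C_1\log T + C_2 \log\log T + C_3 + 1 + \frac{A}{\pi T_0}
\]
for every $T\ge T_0$, so that \eqref{main-bound-2} holds with the same $C_1$ and $C_2$ as in Theorem \ref{main-thm-0} and with $C_3' := C_3 + 1 + A/(\pi T_0)$. The only step beyond quoting Theorem \ref{main-thm-0} is making the Stirling remainder $R(T)$ genuinely explicit at $T = T_0$; this is a routine calculation and not a real obstacle. Any further tightening of $C_3'$ would come from carrying Stirling to more terms rather than from any structural change in the argument.
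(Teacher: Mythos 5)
Your proposal gives a valid bound for $|S(T)|$, but it does not prove the theorem as stated: it produces a value of $C_3'$ that is about $2$ larger than the one claimed, and hence does not recover the Table~\ref{table2} values. The theorem defines $C_3'$ via \eqref{def-D3} as
\[
C_3' = C_3 - 1 + \frac{1}{\pi}\arctan\Bigl(\frac{\sigma_1-1}{T_0}\Bigr) + \frac{1}{\pi}\arctan\Bigl(\frac{1}{2T_0}\Bigr),
\]
which for $T_0 = 30\,610\,046\,000$ is essentially $C_3 - 1$. Your argument instead yields $C_3' = C_3 + 1 + A/(\pi T_0) \approx C_3 + 1$.

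The loss is caused by applying the triangle inequality on top of Theorem~\ref{main-thm-0}. The bound in Theorem~\ref{main-thm-0} (equivalently, \eqref{main-est} divided by $2$) carries a constant $\tfrac{5}{4}$ that comes from bounding $\tfrac{1}{\pi}\arctan(2T)$ and $\tfrac{1}{\pi}\arctan(T/(\sigma_1-1))$ crudely by $\tfrac12$ each, plus $\tfrac{\pi}{4}$ coming from Backlund's trick. These two arctangents do not appear at all in the contour decomposition of $S(T)$: the exact relation $S(T) = \bigl(N(T)-\text{main term}+\tfrac18\bigr) - 1 - \tfrac{g(T)}{2}$ is an identity, and when you bound the right-hand side by the triangle inequality you are effectively double-counting that $1$ — once hidden inside the $C_3$ term and once in the explicit $-1$. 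The paper avoids this by going back to the contour and decomposing $S(T)$ directly as $\tfrac1\pi\Delta_{\mathcal{C}_1}\arg\zeta + \tfrac1\pi\Delta_{\mathcal{C}_2}\arg((s-1)\zeta(s)) - \tfrac1\pi\Delta_{\mathcal{C}_2}\arg(s-1)$, bounding the first term by $\tfrac1\pi\log\zeta(\sigma_1)$, the last by two very small $\arctan$'s (small because they involve $T_0^{-1}$ rather than $T$), and the middle term by the same Backlund-trick machinery that gives $C_1\log T + C_2\log\log T$. This direct route keeps only the $\tfrac14$ from Backlund's trick plus tiny corrections, hence the net improvement of $2$ in $C_3'$. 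To prove the theorem as stated you must redo the $\mathcal{C}_1$, $\mathcal{C}_2$ bookkeeping directly for $S(T)$, not derive it a posteriori from Theorem~\ref{main-thm-0}.
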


A stronger bound for $S(T)$, up to certain given height, can be confirmed using the database of non-trivial zeros of $\zeta(s)$ computed by Platt and made available at \cite{LMFDB}. Indeed, nowadays, one has
\begin{equation}\label{bd-S}
|S(T)| \le 2.5167
\end{equation}
for $0 \leq T \leq 30\,610\,046\,000$.\footnote{Recently,  Platt and
Trudgian \cite{PT21} verified the Riemann hypothesis for the height up to $3 \cdot 10^{12}$, which would allow one to further bound $S(T)$ for $0 \leq T \leq 3 \cdot 10^{12}$.}
Hence, by Theorem \ref{main-thm-S(T)} (with $T_0 = 30\,610\,046\,000$ and Table \ref{table2}) and \eqref{bd-S}, we derive the following explicit bound for $S(T)$. 

\begin{corollary}\label{main-thm-2}
For any $T\ge e$, we have
$$
|S(T)|\le \min\{  0.1038  \log T + 0.2573  \log\log T + 8.3675,\enspace 0.1095\log T +   0.2042\log\log T +   3.0305 \}.
$$
\end{corollary}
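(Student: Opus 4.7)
The plan is to combine Theorem \ref{main-thm-S(T)} with the numerical verification recorded in \eqref{bd-S}. Set $T_0 = 30\,610\,046\,000$, the height up to which $|S(T)| \le 2.5167$ is known.

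For $T \ge T_0$, I would apply Theorem \ref{main-thm-S(T)} twice, once for each of two admissible triples $(C_1, C_2, C_3')$ read off from Table \ref{table2}. One choice yields $(0.1038, 0.2573, 8.3675)$: this is precisely the parameter configuration that minimises $C_1$ (matching the $C_1 = 0.1038$ of Corollary \ref{main-thm}), with $C_3' = 9.3675 - 1 = 8.3675$ reflecting the standard relation $N(T) = \pi^{-1}\vartheta(T) + 1 + S(T)$ between $N(T)$ and $S(T)$, which converts the bound on $N(T) - \frac{T}{2\pi}\log\frac{T}{2\pi e} + \frac{1}{8}$ into a bound on $S(T)$ that loses an additive constant. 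A second choice --- tolerating a slightly larger $C_1$ in exchange for smaller $C_2$ and $C_3'$ --- gives $(0.1095, 0.2042, 3.0305)$. Taking the minimum of the two resulting bounds on $|S(T)|$ produces the stated inequality in this range.

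For $e \le T \le T_0$, the inequality $|S(T)| \le 2.5167$ from \eqref{bd-S} is already stronger than what is asserted, so it suffices to check that each of the two expressions inside the minimum exceeds $2.5167$ throughout $[e, T_0]$. Since both expressions are strictly increasing on $[e, \infty)$, it is enough to evaluate at the endpoint $T = e$ (where $\log T = 1$ and $\log\log T = 0$): the first expression gives $0.1038 + 8.3675 = 8.4713$ and the second $0.1095 + 3.0305 = 3.1400$, both comfortably above $2.5167$.

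The only labour in this argument is identifying the two entries of Table \ref{table2} that supply the desired constants; this reduces to substituting the selected parameters $(c, r, \eta)$ and the bounds \eqref{cond-1}, \eqref{cond-2} into the formulae \eqref{def-Cj}, \eqref{def-C1}, \eqref{def-C2}, and \eqref{def-D3} appearing in Theorem \ref{main-thm-S(T)}, and is not conceptually delicate. I do not expect any genuine obstacle here, as all the hard analytic work has been packaged into Theorem \ref{main-thm-S(T)}.
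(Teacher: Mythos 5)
Your proof is correct and follows the same route as the paper: for $T \ge T_0 = 30\,610\,046\,000$ apply Theorem~\ref{main-thm-S(T)} with the two admissible triples $(C_1,C_2,C_3')$ from Table~\ref{table2}, and for $e\le T\le T_0$ fall back on the database bound $|S(T)|\le 2.5167$ from~\eqref{bd-S}, which is smaller than either expression in the stated minimum on that range.

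One side remark in your write-up is misleading, though it does not affect the argument since you ultimately read $C_3'=8.3675$ straight from Table~\ref{table2}. You attribute the relation $C_3' = C_3 - 1$ to the identity $N(T)=\pi^{-1}\vartheta(T)+1+S(T)$; but naively converting a bound $|N(T)-\tfrac{T}{2\pi}\log\tfrac{T}{2\pi e}+\tfrac18|\le B$ through this identity gives $|S(T)|\le B+1$, i.e.\ the constant would get \emph{larger}, not smaller. The true source of the $-1$ is \eqref{def-D3}: the proof of Theorem~\ref{main-thm-S(T)} is carried out directly (not by subtracting the main term from Theorem~\ref{main-thm-0}), and its constant term starts from $\tfrac14$ plus two small $\arctan$ contributions rather than the $\tfrac54$ arising in Theorem~\ref{main-thm-0}, which is where the net saving of about $1$ comes from.
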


We note that this improves the previous best-known explicit bound (for $T$ sufficiently large) due to Platt and Trudgian \cite{PT} (see also \cite{Tr12, Tr14-2}), who showed that for $T\ge e$,
$$
|S(T)|\le 0.110\log T+ 0.290\log\log T + 2.290.
$$

The proofs of Theorems \ref{main-thm} and \ref{main-thm-S(T)} are based on the work of \cite{BMOR20,HSW,Tr14-2,Tr15}.\footnote{Regrettably, as pointed out in \cite{BMOR20},  there is an error appearing in \cite{Tr14-2,Tr15} (and \cite{PT}, where the erroneous result of \cite{Tr14-2} was used) since  the ranges of  parameters involved in the final formulae were not verified  properly; \cite{BMOR20,HSW} fix this issue for \cite{Tr15}. In a certain degree, the objective of the presented paper is to fix the error occurring in \cite{Tr14-2}.}  Compared to the considerations of Dirichlet $L$-functions in \cite{BMOR20,Tr15} and Dedekind zeta functions in \cite{HSW,Tr15}, we further use subconvexity bounds for $\zeta(\frac{1}{2}+it)$, together with the Phragm\'{e}n-Lindel\"{o}f principle and the functional equation, to obtain a shaper estimate for $\zeta(s)$ in the strip $0\le \Re(s)\le 1$. Also, based on the idea of  \cite{BMOR20}, we refine the bound for $\zeta(s)$ on $\Re(s)<0$ used in \cite{Tr14-2} by  bounding $\zeta(s)$ over  $\Re(s)<-\frac{1}{2}$. Lastly, we note that most numerical computations were performed in Maple.

\section{Main term and bounds for gamma factors}\label{main}

We recall the completed Riemann zeta function $\xi(s)$ is defined by 
\begin{equation}\label{def-xi}
\xi(s)= s(s-1) \g (s) \zeta (s),
\end{equation}
where 
$$
\gamma (s) =  \pi^{-\frac{s}{2}} \Gamma \left( \frac{s}{2}\right).
$$
It is well-known that $\xi(s)$ can be extended to an entire function of order 1, which satisfies the functional equation
\begin{equation}\label{FE}
\xi(s) = \xi (1-s).
\end{equation}

To follow the argument used in \cite{HSW}, it would be simpler to work with the following ``symmetric version'' of $N(T)$. We introduce 
\[
N_{\Bbb{Q}}(T) = \# \{ \rho \in  \C  \mid  \zeta(\rho) =0,\  0  <\beta <  1, \ |\gamma| \leq T\}
\]
for $T\geq 0$.\footnote{This notation agrees with  \cite{HSW}, where we defined $N_K(T)$, the zero-counting function for the Dedekind zeta function of a number field $K$.} Note that $N_{\Bbb{Q}}(T)= 2 N(T)$.

Let $\sigma_1 >1$, and let $\mathcal{R}$ be the rectangle with vertices $\sigma_1 - iT,\  \sigma_1 + iT,\  1-  \sigma_1+ iT$, and $1- \sigma_1 - iT$ (that is away from zeros of $\xi (s)$). Since $\xi (s)$ is entire, applying the  argument principle, we know that 
\[
 N_{\Bbb{Q}}(T) = \frac{1}{2\pi } \Delta_{\mathcal{R}} \arg \xi (s).
\]
 We let $\mathcal{C}$ be the part of the contour of $\mathcal{R}$ in $\Re (s ) \geq \frac{1}{2}$ and $\mathcal{C}_0$ be the part of the contour of $\mathcal{R}$ in $\Re (s ) \geq \frac{1}{2}$ and $\Im (s) \geq 0$. From the functional equation \eqref{FE} and the fact that $\overline{\xi(s)} = \xi (\bar{s})$, it follows that
\[
\Delta_{\mathcal{R}} \arg \xi(s) = 2 \Delta_{\mathcal{C}} \arg \xi(s)  =4    \Delta_{\mathcal{C}_0} \arg \xi(s),
\]
and thus
\begin{align}\label{formula-N-K}
 N_{\Bbb{Q}}(T)    = \frac{2}{\pi}   \Delta_{\mathcal{C}_0} \arg \xi (s).
\end{align}

Now, by \eqref{def-xi}, we arrive at
\begin{align}\label{deta-expansion}
 \begin{split}
\Delta_{\mathcal{C}_0} \arg \xi(s) 
= \Delta_{\mathcal{C}_0} \arg s + \Delta_{\mathcal{C}_0} \arg \pi^{-s/2}  
+  \Delta_{\mathcal{C}_0} \arg \Gamma  \left(\frac{s}{2} \right)  + \Delta_{\mathcal{C}_0} \arg \left( (s-1) \zeta (s)\right).
 \end{split}
\end{align}
In addition, by a straightforward calculation, we have
\begin{align}   \label{deta-explicit}
 \begin{split}
&\Delta_{\mathcal{C}_0} \arg s = \arctan (2T),\\
&\Delta_{\mathcal{C}_0} \arg \pi^{-s/2} = \frac{T}{2} \log \left( \frac{1}{\pi }\right),\\
&\Delta_{\mathcal{C}_0}  \arg \Gamma (s) = \Delta_{\mathcal{C}_0}  (\Im \log \Gamma (s)) = \Im \log \Gamma \left(\frac{1}{2} + iT \right) .
 \end{split}
\end{align}

In order to control the contribution of the gamma factor in \eqref{deta-expansion}, we set
\begin{equation}\label{def-g}
g(T)= \frac{2}{\pi}\Im \log \Gamma \left(\frac{1}{4}  + i\frac{T}{2} \right) - \frac{T}{\pi} \log \left( \frac{T}{2e}\right) +\frac{1}{4}
\end{equation}
and recall that by \cite[Proposition 3.2]{BMOR20}, one has  
\begin{equation}\label{bd-g}
|g(T)|\le \frac{1}{25T}
\end{equation}
for $T\ge 5/7$.
Now, gathering  \eqref{formula-N-K}, \eqref{deta-expansion}, \eqref{deta-explicit}, and \eqref{def-g}, we establish
\begin{equation}\label{bd-N-K-1}
 N_{\Bbb{Q}}(T) = \frac{2}{\pi}\arctan ( 2T) + g(T) +  \frac{T}{\pi} \log  \left( \frac{T}{2\pi e}\right)- \frac{1}{4} 
+ \frac{2}{\pi}\Delta_{\mathcal{C}_0} \arg ( (s-1) \zeta(s) ).
\end{equation}

To control $\Delta_{\mathcal{C}_0} \arg ( (s-1) \zeta(s) )$, we let $\mathcal{C}_1$ be the vertical line from $\sigma_1$ to $\sigma_1 + iT$ and $\mathcal{C}_2$ be the horizontal line from $\sigma_1 + iT$ to $\frac{1}{2} + iT$. As
\begin{align*}
\Delta_{\mathcal{C}_1} \arg (s-1)\zeta (s)
 &=\Delta_{\mathcal{C}_1}  \arg (s-1) +\Delta_{\mathcal{C}_1}  \arg \zeta (s) 
= \arctan \left( \frac{T}{\sigma_1 -1}\right) +\Delta_{\mathcal{C}_1} \arg \zeta (s)
\end{align*}
and for $\sigma_1 > 1$,
$$
|\Delta_{\mathcal{C}_1}  \arg  \zeta (s)|
 =  |\arg \zeta (\sigma_1+iT) |
  \leq |\log \zeta (\sigma_1+iT) |
  \le  \log \zeta (\sigma_1),  
$$
we obtain
\begin{equation}\label{bd-C1}
|\Delta_{\mathcal{C}_1} \arg (s-1)\zeta (s)| \leq \frac{\pi}{2} +   \log \zeta(\sigma_1).
\end{equation}
Hence, from \eqref{bd-g}, \eqref{bd-N-K-1}, and \eqref{bd-C1}, it follows that
\begin{equation}\label{main-est}
 \left|  N_{\Bbb{Q}}(T)  -  \frac{T}{\pi} \log\left( \frac{T}{2\pi e}\right)  +  \frac{1}{4} \right|  \leq  
 2 +\frac{1}{25 T}+ \frac{2}{\pi} \log \zeta(\sigma_1) +  \frac{2}{\pi}| \Delta_{\mathcal{C}_2}  \arg ( (s-1) \zeta(s) )|.
\end{equation}

To end this section, we shall borrow some estimates for the gamma function from \cite{BMOR20} as follows. For  $0\le d < 9/2$ and $T\ge 5/7$, we define
$$
\mathcal{E}(T,d)=  \left| \left.  \Im \log \Gamma \left( \frac{\sigma +iT}{2} \right)\right|_{\sigma=\frac{1}{2}}^{\frac{1}{2}+d} 
+  \left. \Im \log \Gamma \left( \frac{\sigma +iT}{2} \right)\right|_{\sigma=\frac{1}{2}}^{\frac{1}{2}-d} \right|,
$$
As in \cite[p. 1463]{BMOR20}, we set
\begin{align*}
 \begin{split}
E (T,d)& = \frac{2T /3}{(2d + 17)^2 + 4 T^2} + \frac{2T/3}{( -2d +17)^2 + 4T^2}  - \frac{4T/3}{17^2 + 4T^2}\\
& + \frac{T}{2} \log \left( 1 + \frac{ 17^2}{4T^2}\right) - \frac{T}{4} \log \left( 1 + \frac{( 2d +17)^2}{4T^2}\right) - \frac{T}{4} \log \left( 1 + \frac{( -2d + 17)^2}{4T^2}\right)\\
&+ \frac{(8+6\pi)/45}{(( 2d + 17)^2 + 4T^2)^{3/2}}
 +  \frac{(8+6\pi)/45}{(( - 2d + 17)^2 + 4T^2)^{3/2}} + \frac{2(8+ 6\pi)/45}{( 17^2 + 4T^2)^{3/2}} \\
& + \sum_{k=0}^3\left( 2\arctan \frac{1+4k}{2T} - \arctan \frac{2d+1+4k}{2T} - \arctan \frac{-2d+1+4k}{2T} \right)\\  
& + \frac{ 2d +15}{4} \arctan \frac{ 2d + 17}{2T}  + \frac{ - 2d +15}{4} \arctan \frac{ -2d + 17}{2T}
 - \frac{ 15}{2} \arctan\frac{17}{2T}.
 \end{split} 
\end{align*}
It has been shown in \cite[p. 1462]{BMOR20} that  $\mathcal{E}(T,d) \le E(T,d)$ for $0 \leq d < 9/2$ and  $T\ge 5/7$. Moreover, one has the following lemma established in \cite[Lemma 3.4]{BMOR20}.

\begin{lemma}\label{EK-bd-final}
For $0\le \delta_1 \le d < 9/2$ and  $T\ge 5/7$,  one has
$$
0<E(T,\delta_1)\le E(T,d).
$$
Also, for $d\in [\frac{1}{4}, \frac{5}{8}] $ and $T\ge 5/7$, one has
$$
\frac{E(T,d)}{\pi} \le \frac{640d-112}{1536(3T-1)} + \frac{1}{2^{10}}.
$$
\end{lemma}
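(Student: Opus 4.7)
The plan is to establish two separate properties of the explicit function $E(T,d)$: first, monotonicity and strict positivity of $d \mapsto E(T,d)$ on $[0,9/2)$ for fixed $T \ge 5/7$; second, an explicit linear-in-$d$ upper bound on the narrow strip $d \in [\tfrac14, \tfrac58]$. Both parts follow the strategy of \cite[Lemma~3.4]{BMOR20}, which is our ultimate reference.

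For the first claim I would compute $\partial_d E(T,d)$ by grouping the symmetric pairs $h(2d+17) + h(-2d+17)$, together with the analogous pairs $h(2d+1+4k) + h(-2d+1+4k)$ arising in the $k$-sum, that occur throughout the definition. Each such pair is an even function of $d$, so its $d$-derivative at $d \ge 0$ reduces to a signed difference $\pm\bigl[h'(2d+17) - h'(-2d+17)\bigr]$; the required nonnegativity follows block-by-block from convexity or concavity of the elementary functions $x \mapsto x/(x^2+c^2)$, $x \mapsto \log(1+x^2/c^2)$, $x \mapsto (x^2+c^2)^{-3/2}$, and $x \mapsto \arctan(x/c)$, where the restriction $d < 9/2$ keeps all arguments in the relevant monotone regime and $T \ge 5/7$ controls the denominators. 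For strict positivity at $d=0$, observe that the rational, logarithmic, and arctangent blocks of $E(T,0)$ telescope to zero, whereas the three cubic-decay terms survive and sum to $\frac{4(8+6\pi)/45}{(17^2 + 4T^2)^{3/2}} > 0$. Combined with the monotonicity this yields $0 < E(T,\delta_1) \le E(T,d)$ for all $0 \le \delta_1 \le d < 9/2$.

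For the second claim I would asymptotically expand each block of $E(T,d)$ in powers of $1/T$: the rational block is $O(d^2/T^3)$, the logarithmic block is $O(d^2/T)$ by Taylor's theorem, the cubic-decay block is $O(T^{-3})$, and the arctangent blocks (using $\arctan(a/(2T)) = \tfrac{a}{2T} + O(a^3/T^3)$ together with the algebraic cancellations in the $k$-sum and the $(\pm 2d + 15)/4$ prefactors) produce the main term, linear in $d$ of the form $\alpha d/T$ with an explicitly computable $\alpha$. Rewriting $1/T$ as $3/(3T-1) + O(1/T^2)$ to match the target denominator and absorbing all lower-order errors into the constant $2^{-10}$ produces a bound of the asserted shape $\frac{640d-112}{1536(3T-1)} + \frac{1}{2^{10}}$.

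The main obstacle is verifying the specific numerical constants $640$, $112$, $1536$, and $2^{-10}$. Since the right-hand side is linear (hence monotone) in $d$ and decreasing in $T$, and the first part of the lemma gives monotonicity of the left-hand side in $d$, the bound reduces to a finite numerical check at the endpoints $d \in \{\tfrac14, \tfrac58\}$ of the strip together with the boundary value $T = 5/7$. This is precisely the verification carried out in \cite[Lemma~3.4]{BMOR20}, from which we import the stated inequality.
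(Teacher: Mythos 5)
The paper does not prove this lemma at all: it states the definition of $E(T,d)$, records that $\mathcal{E}(T,d)\le E(T,d)$ was established in \cite{BMOR20}, and then cites \cite[Lemma~3.4]{BMOR20} verbatim for both inequalities. Your proposal ultimately does the same thing---the final sentence imports the result from \cite{BMOR20}---so in the end the two approaches coincide. That said, the sketch you give along the way has two gaps worth flagging, since it reads as if it could be promoted to a self-contained proof.

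First, for the monotonicity claim, the assertion that each even pair $h(2d+17)+h(-2d+17)$ (and the analogous pairs $h(2d+1+4k)+h(-2d+1+4k)$) has nonnegative $d$-derivative block by block is not an immediate consequence of convexity; for the second-difference arctangent terms the sign depends on concavity of $\arctan$ on the positive axis, for the weighted pair $\frac{2d+15}{4}\arctan\frac{2d+17}{2T}+\frac{-2d+15}{4}\arctan\frac{-2d+17}{2T}$ the relevant property is convexity of $\phi(d)=\frac{2d+15}{4}\arctan\frac{2d+17}{2T}$, which requires a genuine computation, and throughout one must check that the arguments stay in the convex/concave regimes for \emph{all} $T\ge 5/7$, not just large $T$. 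None of this is carried out, so the claim is asserted rather than proved. (The observation that $E(T,0)=\frac{4(8+6\pi)/45}{(17^2+4T^2)^{3/2}}>0$ is correct.)

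Second, the reduction of the upper bound $\frac{E(T,d)}{\pi}\le\frac{640d-112}{1536(3T-1)}+2^{-10}$ to ``a finite numerical check at the endpoints'' does not follow from what precedes it. The first part of the lemma gives that $E(T,d)$ is increasing in $d$, but the right-hand side is also increasing in $d$, so monotonicity in $d$ does not localize the inequality to $d\in\{\tfrac14,\tfrac58\}$. Likewise, the right-hand side is decreasing in $T$ (for $d>112/640$), but no monotonicity in $T$ of $E(T,d)$ has been established, so one cannot simply set $T=5/7$. An actual proof would have to bound the difference over the full rectangle $[\tfrac14,\tfrac58]\times[5/7,\infty)$, which is exactly the nontrivial work done in \cite{BMOR20} and is appropriately cited rather than reproduced both in the paper and, ultimately, in your proposal.
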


We remark that the number $5/7$ above is not random but is borrowed directly from \cite{BMOR20}.  In \cite{BMOR20}, it is chosen to obtain a proper numerical bound (in order to count the zeros of Drirchlet $L$-functions). One may replace $5/7$ with an appropriate larger $T_0$ to get a better result.

\section{Backlund's trick}\label{B-trick}

In order to  estimate $\Delta_{\mathcal{C}_2}  \arg ( (s-1) \zeta(s) )$, we shall borrow some results from \cite{HSW}. Define
\begin{align}\label{def-fN}
f_N(s)=\frac{1}{2}\left(( (s+iT-1)\zeta(s+iT))^N+ ((s-iT-1)\zeta(s-iT))^N\right)
\end{align}
for  $N\in\Bbb{N}$. Let $D(c,r)$ be the open disk centred at $c$ with radius $r$. For any $N\in  \Bbb{N}$, we define
\begin{align*}
S_N(c,r)=\frac{1}{N}\sum_{z\in\mathcal{S}_N(D(c,r) )  }\log\frac{r}{|z-c|},
\end{align*}
where $\mathcal{S}_N(D(c,r) )$ denotes the set of  zeros of $f_N(s)$ in $D(c,r)$. In \cite[Proposition 3.5]{HSW},  the authors  prove the following  upper bound for $S_N(c,r)$.

\begin{proposition}\label{Je-bd}
Let $c$, $r$, and $\sigma_1$ be real numbers such that
\begin{align*}
c-r<\frac{1}{2}<1<c<\sigma_1<c+r.
\end{align*}
Let  $F_{c,r}:[-\pi,\pi]\to\mathbb{R}$ be an even function such that $F_{c,r}(\theta)\geq\frac{1}{N_m}\log|f_{N_m}(c+re^{i\theta})|$. Then there is an infinite sequence of natural numbers $(N_m)_{m=1}^{\infty}$ such that
\begin{align*}
\limsup_{m\to\infty}S_{N_m}(c,r)\leq\log\left(\frac{1}{\sqrt{(c-1)^2+T^2}}\frac{\zeta(c)}{\zeta(2c)}\right)+\frac{1}{\pi}\int_{0}^{\pi}F_{c,r}(\theta)d\theta.
\end{align*}
\end{proposition}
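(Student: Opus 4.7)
The plan is to apply Jensen's formula to the entire function $f_{N_m}$ on the disk $D(c,r)$, exploit the symmetry $\overline{f_N(s)} = f_N(\bar s)$ to fold the resulting integral onto $[0,\pi]$, and then pass to a subsequence along which $f_{N_m}(c)$ does not get too small, closing out with the Euler-product bound for $|\zeta(c+iT)|^{-1}$.

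First I would note that the simple pole of $\zeta$ at $s=1$ produces poles of $\zeta(s \pm iT)$ at $s = 1 \mp iT$, both of which are cancelled by the linear factors built into the definition \eqref{def-fN} of $f_N$, so $f_N$ is entire. Provided $f_N(c) \neq 0$, Jensen's formula on $D(c,r)$ reads
$$N \cdot S_N(c,r) = \frac{1}{2\pi}\int_0^{2\pi} \log|f_N(c + re^{i\theta})|\, d\theta - \log|f_N(c)|.$$
Because $\overline{\zeta(s)} = \zeta(\bar s)$ away from the pole, one verifies $\overline{f_N(s)} = f_N(\bar s)$, so $|f_N(c+re^{i\theta})|$ is an even function of $\theta$, and the integral folds to $\frac{1}{\pi}\int_0^\pi$. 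Invoking the hypothesis $F_{c,r}(\theta) \geq N_m^{-1}\log|f_{N_m}(c+re^{i\theta})|$ then gives
$$S_{N_m}(c,r) \leq \frac{1}{\pi}\int_0^\pi F_{c,r}(\theta)\, d\theta - \frac{1}{N_m}\log|f_{N_m}(c)|.$$

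The heart of the argument is choosing the subsequence to control the final term. Writing $A = (c+iT-1)\zeta(c+iT)$, one has $\bar A = (c-iT-1)\zeta(c-iT)$ and therefore $f_N(c) = \Re(A^N) = |A|^N \cos(N\arg A)$. Since $c > 1$ lies in the region of absolute convergence where $\zeta$ does not vanish, $|A| > 0$. I would then select $(N_m)$ as an infinite sequence of positive integers along which $|\cos(N_m \arg A)|$ is bounded below by a positive constant: if $\arg(A)/\pi$ is irrational, this follows from Weyl equidistribution, and if rational, say $\arg(A)/\pi = p/q$, then $N \mapsto \cos(N\arg A)$ takes finitely many values and vanishes only on certain residue classes modulo $2q$, which one avoids. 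Along this sequence $N_m^{-1}\log|f_{N_m}(c)| = \log|A| + o(1)$. This subsequence selection is the main subtlety; once in hand, passing to the $\limsup$ yields
$$\limsup_{m\to\infty} S_{N_m}(c,r) \leq \frac{1}{\pi}\int_0^\pi F_{c,r}(\theta)\, d\theta - \log|A|.$$

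To match the stated right-hand side, I would invoke the Euler product: for $\Re(s) = \sigma > 1$,
$$|\zeta(s)|^{-1} = \prod_p |1-p^{-s}| \leq \prod_p (1+p^{-\sigma}) = \frac{\zeta(\sigma)}{\zeta(2\sigma)}.$$
Applied at $s = c+iT$, this gives $|A|^{-1} \leq \frac{1}{\sqrt{(c-1)^2+T^2}}\cdot\frac{\zeta(c)}{\zeta(2c)}$, so $-\log|A|$ is bounded by the logarithm appearing on the right of the claim, and the proposition follows.
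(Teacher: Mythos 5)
Your proof is correct and follows the same route as the cited source ([HSW, Proposition~3.5], in turn modeled on the arguments in \cite{BMOR20} and \cite{Tr15}): apply Jensen's formula to the entire function $f_N$ on $D(c,r)$, fold the boundary integral via the symmetry $\overline{f_N(s)}=f_N(\bar s)$, choose the subsequence $(N_m)$ so that $|\cos(N_m\arg A)|$ stays bounded away from $0$ (so that $N_m^{-1}\log|f_{N_m}(c)|\to\log|A|$), and finish with the Euler-product bound $|\zeta(c+iT)|^{-1}\le\zeta(c)/\zeta(2c)$. All the steps check out, including the cancellation of the poles of $\zeta(s\pm iT)$ by the linear factors and the rational/irrational case split for the subsequence.
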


To end this section, we recall the following version of Backlund's trick established in \cite[Proposition 3.7]{HSW} (cf.  \cite{BMOR20, Tr14-2, Tr15}).  As explained in \cite[Sec. 3]{Tr15}, using  Backlund's trick, one can track contribution from zeros of $f_N(\sigma)$ in $[1-\sigma_1, \frac{1}{2} ]$  whence there are zeros  in $[\frac{1}{2}, \sigma_1 ]$.  

\begin{proposition}[Backlund's trick]\label{BT}
Let $c$ and $r$ be real numbers. Set 
\[
\sigma_1 = c + \frac{(c-1/2)^2}{r} \quad \text{and} \quad  \delta = 2c - \sigma_1 -\frac{1}{2}.
\]
If $1 < c < r$ and $0 < \delta < \frac{1}{2}$, then
\[ 
\left| \left.  \arg \left( (\sigma + iT-1)\zeta(\sigma + iT) \right) \right|_{\sigma = \sigma_1}^{1/2} \right|
 \leq \frac{\pi S_N(c,r)}{2 \log (r/ (c-1/2))} + \frac{E(T,\delta)}{2} +\frac{\pi}{N}+ \frac{\pi}{2N}+
\frac{\pi}{4}.
\]
\end{proposition}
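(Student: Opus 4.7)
The plan is to bound $\left|\left.\arg\bigl((\sigma+iT-1)\zeta(\sigma+iT)\bigr)\right|_{\sigma=\sigma_1}^{1/2}\right|$ by reducing to a count of real sign changes of $f_N$, halving via the functional equation, and applying Proposition \ref{Je-bd}. Writing $h(s)=(s-1)\zeta(s)$, the first observation is that for real $\sigma$, one has $f_N(\sigma)=\Re(h(\sigma+iT)^N)$, so $f_N$ is real-valued on $\mathbb{R}$. Between two consecutive real zeros of $f_N$ on a real interval $[a,b]$, the quantity $N\arg h(\sigma+iT)$ changes by at most $\pi$, and hence $\left|N\left.\arg h(\sigma+iT)\right|_{\sigma=a}^{b}\right|\leq\pi(n+1)$, where $n$ is the number of real zeros of $f_N$ in $[a,b]$.

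Second, to exploit the functional equation, I would use $\xi(s)=\xi(1-s)$ together with $\xi(\bar s)=\overline{\xi(s)}$ to deduce $\xi(1-\sigma_1+iT)=\overline{\xi(\sigma_1+iT)}$. Continuing a branch of $\arg\xi(\sigma+iT)$ continuously along the real segment from $\sigma_1$ to $1-\sigma_1$ then forces the midpoint value $\arg\xi(1/2+iT)\in\pi\mathbb{Z}$, which yields the halving identity $\left.\arg\xi(\sigma+iT)\right|_{\sigma=\sigma_1}^{1/2}=\tfrac{1}{2}\left.\arg\xi(\sigma+iT)\right|_{\sigma=\sigma_1}^{1-\sigma_1}$. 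Splitting $\arg\xi=\arg h+\arg(s+iT)+\arg\gamma(s+iT)$ and rearranging then expresses $\left.\arg h(\sigma+iT)\right|_{\sigma=\sigma_1}^{1/2}$ as one half of $\left.\arg h(\sigma+iT)\right|_{\sigma=\sigma_1}^{1-\sigma_1}$ plus (i) an arctangent remainder from $\arg(s+iT)$, bounded by $\pi/4$, and (ii) a difference of $\Im\log\Gamma((\sigma+iT)/2)$ values which is precisely $\mathcal{E}(T,\delta)$ with $\delta=2c-\sigma_1-\tfrac{1}{2}$; by Section \ref{main}, the latter is bounded by $E(T,\delta)$, contributing $E(T,\delta)/2$ after the halving.

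Finally, to bound the remaining $\left.\arg h(\sigma+iT)\right|_{\sigma=\sigma_1}^{1-\sigma_1}$ piece, I would apply the first step with $n$ the number of real zeros of $f_N$ on the relevant segment, giving $\pi(n+1)/N$, and then invoke Proposition \ref{Je-bd}. The geometric-mean identity $(c-\tfrac{1}{2})=\sqrt{r(\sigma_1-c)}$, built into the choice $\sigma_1=c+(c-\tfrac{1}{2})^2/r$, together with the standing hypotheses on $c$ and $r$, ensures that $|z-c|\leq c-\tfrac{1}{2}$ for each real zero $z$ counted; summing $\log(r/|z-c|)\geq\log(r/(c-\tfrac{1}{2}))$ gives $n/N\leq S_N(c,r)/\log(r/(c-\tfrac{1}{2}))$. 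Combining with the halving produces the main term $\pi S_N(c,r)/\bigl(2\log(r/(c-\tfrac{1}{2}))\bigr)$, while the discretisation remainders $\pi/N$ and $\pi/(2N)$ come from the $+1$ in the sign-count inequality, applied once to the halved piece and once to the arctangent correction. The main obstacle will be the careful bookkeeping in the halving step: selecting compatible continuous branches of $\arg\xi$, $\arg(s+iT)$, and $\arg\Gamma$ so that the explicit residues come out as $E(T,\delta)/2+\pi/4$ with exactly the stated constants and signs.
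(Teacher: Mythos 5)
Your general framework---sign changes of $f_N$ control the argument increment, Backlund's reflection via the functional equation provides a halving, and Proposition \ref{Je-bd} converts a zero count into $S_N(c,r)$---is the right skeleton, but two of the central claims are incorrect and the argument as written does not reach the stated bound. First, the gamma correction: the halving you set up runs from $\sigma_1$ to $1-\sigma_1$, so the residual combination of $\Im\log\Gamma$ values that appears is $\mathcal{E}(T,d)$ with $d=\sigma_1-\tfrac{1}{2}$, \emph{not} with $d=\delta=2c-\sigma_1-\tfrac{1}{2}$. These are genuinely different: under $1<c<r$ one has $\sigma_1-\tfrac{1}{2}=(c-\tfrac{1}{2})\bigl(1+(c-\tfrac{1}{2})/r\bigr)>\tfrac{1}{2}$, which lies outside the window $[\tfrac{1}{4},\tfrac{5}{8}]$ for which Lemma \ref{EK-bd-final} gives a useful estimate, whereas $\delta=(c-\tfrac{1}{2})\bigl(1-(c-\tfrac{1}{2})/r\bigr)$ is a strictly smaller quantity. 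Second, the zero count: you assert $|z-c|\le c-\tfrac{1}{2}$ for every real zero $z$ of $f_N$ on $[1-\sigma_1,\sigma_1]$, but at $z=1-\sigma_1$ one has $|z-c|=2c-1+(c-\tfrac{1}{2})^2/r>c-\tfrac{1}{2}$, and for the parameters the paper actually uses (Table \ref{table2}, $c\approx1$, $r\approx1.06$) one even has $1-\sigma_1<c-r$, so the left portion of the symmetric segment lies \emph{outside} $D(c,r)$ and its zeros are not recorded by $S_N(c,r)$ at all. Hence halving over $[1-\sigma_1,\sigma_1]$ cannot produce the main term $\pi S_N(c,r)/\bigl(2\log(r/(c-\tfrac{1}{2}))\bigr)$.

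The missing idea is that the factor of $2$ comes from doubling the \emph{weight}, not from doubling the \emph{interval}. Since $\sigma_1-c=(c-\tfrac{1}{2})^2/r$, one has $\log\bigl(r/(\sigma_1-c)\bigr)=2\log\bigl(r/(c-\tfrac{1}{2})\bigr)$; any zero of $f_N$ in the inner interval $[2c-\sigma_1,\sigma_1]=[\tfrac{1}{2}+\delta,\sigma_1]$ (symmetric about $c$) lies within distance $\sigma_1-c$ of $c$ and therefore already contributes at least $2\log(r/(c-\tfrac{1}{2}))$ to $NS_N(c,r)$. The segment $[\tfrac{1}{2},\sigma_1]$ should be split at $\tfrac{1}{2}+\delta$: the outer piece $[\tfrac{1}{2}+\delta,\sigma_1]$ is handled by the weight observation together with the sign-count lemma (producing the $\pi/N$), and only the short inner piece $[\tfrac{1}{2},\tfrac{1}{2}+\delta]$ is reflected across $\tfrac{1}{2}$ to the segment $[\tfrac{1}{2}-\delta,\tfrac{1}{2}+\delta]$, which is exactly what generates the gamma correction $E(T,\delta)/2$ with $d=\delta$ as well as the $\pi/(2N)$ and $\pi/4$ terms. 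As it stands, your proposal misplaces both the source of the factor of $2$ and the $d$ entering $E(T,d)$, so the gap is structural rather than a matter of bookkeeping.
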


\section{Convexity and subconvexity bounds  and $F_{c,r}(\theta) $}

\subsection{Convexity and subconvexity bounds}\label{CSCB}
In light of Propositions \ref{Je-bd}  and \ref{BT}, to estimate \eqref{main-est}, we shall construct an appropriate $F_{c,r}(\theta)$. We first recall the following version of the Phragm\'{e}n-Lindel\"{o}f principle established by Trudgian \cite[Lemma 3]{Tr14-2}.
\begin{proposition}[Phragm\'{e}n-Lindel\"{o}f principle]\label{PL-p}
Let $a,b,Q$ be real numbers such that $b>a$ and $Q+a>1$. Let $f(s)$ be a holomorphic function on the strip $a\leq\Re(s)\leq b$ such that 
\begin{align*}
|f(s)|<C \exp(  e^{k|t| })
\end{align*}
for some $C>0$ and $0<k<\frac{\pi}{b-a}$. Suppose, further, that there are  $A,B,\alpha_1, \alpha_2, \beta_1, \beta_2\ge 0$ such that $\alpha_1\geq\beta_1$  and
\[
|f(s)|\leq
\begin{cases}
A|Q+s|^{\alpha_1}(\log{|Q+s|})^{\alpha_2} & \text{for $\Re(s)=a$;}\\
B|Q+s|^{\beta_1}(\log{|Q+s|})^{\beta_2} &\text{for $\Re(s)=b$.}
\end{cases}
\]
Then for $a\leq\Re(s) \leq b$, one has
\begin{align*}
|f(s)|\leq\big\{A|Q+s|^{\alpha_1}(\log{|Q+s|})^{\alpha_2}\big\}^{\frac{b-\Re(s)}{b-a}}\big\{B|Q+s|^{\beta_1}(\log{|Q+s|})^{\beta_2}\big\}^{\frac{\Re(s)-a}{b-a}}.
\end{align*}
\end{proposition}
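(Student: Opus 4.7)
This is a variant of the classical three-lines (Phragm\'en--Lindel\"of) theorem, refined so that the boundary majorants are of the form (constant)$\cdot|Q+s|^{\alpha}(\log|Q+s|)^{\beta}$ rather than pure powers. My strategy is the standard one: divide $f$ by an explicit interpolating function $\varphi(s)$ that matches the right-hand side on the two edges of the strip, apply the maximum principle to $g=f/\varphi$, and then solve back for $|f|$. The hypothesis $Q+a>1$ is there precisely so that on the closed strip $a\le\Re(s)\le b$ the quantity $Q+s$ avoids the cut: its real part is $>1$, so the principal branch of $\log(Q+s)$ is holomorphic and has positive real part, which in turn lets $\log\log(Q+s)$ be defined by the principal branch as well. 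The double-exponential hypothesis $|f(s)|<C\exp(e^{k|t|})$ with $k<\pi/(b-a)$ is the usual growth restriction that makes Phragm\'en--Lindel\"of applicable.

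\textbf{The interpolating function.} Writing $\lambda(s)=(b-s)/(b-a)$ and $\mu(s)=(s-a)/(b-a)$, so that $\lambda+\mu\equiv 1$ and $(\lambda,\mu)=(1,0)$ on $\Re(s)=a$ while $(\lambda,\mu)=(0,1)$ on $\Re(s)=b$, I would set
\[
\varphi(s)=A^{\lambda(s)}\,B^{\mu(s)}\,(Q+s)^{\alpha_1\lambda(s)+\beta_1\mu(s)}\,\bigl(\log(Q+s)\bigr)^{\alpha_2\lambda(s)+\beta_2\mu(s)}.
\]
This is holomorphic on the strip by the discussion above, and the function $g(s):=f(s)/\varphi(s)$ is holomorphic there. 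A direct computation of $|\varphi(s)|$ using $(Q+s)^{p+iq}=\exp((p+iq)\log(Q+s))$ gives
\[
|\varphi(s)|=A^{\lambda(\sigma)}B^{\mu(\sigma)}|Q+s|^{\alpha_1\lambda(\sigma)+\beta_1\mu(\sigma)}\bigl(\log|Q+s|\bigr)^{\alpha_2\lambda(\sigma)+\beta_2\mu(\sigma)}\cdot\Psi(s),
\]
where $\sigma=\Re(s)$ and $\Psi(s)$ collects the oscillatory factors $\exp(-q\arg(Q+s))$, $\exp(-q'\arg\log(Q+s))$ coming from the imaginary parts of the exponents (which are linear in $t$). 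Crucially, $\arg(Q+s)\in(-\pi/2,\pi/2)$ and $\arg\log(Q+s)\to 0$ as $|t|\to\infty$, so $\Psi$ satisfies a bound of the shape $\exp(O(|t|))$ uniformly on the strip, \emph{not} double-exponential.

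\textbf{Applying the maximum principle.} On $\Re(s)=a$ the hypothesis gives $|g(s)|\le |\Psi(s)|^{-1}$; on $\Re(s)=b$ the hypothesis gives the same thing. Thus on both edges $|g(s)|\le M(s)$ where $M(s)$ has only simple exponential growth in $|t|$. Since $|f|$ satisfies the double-exponential hypothesis and $|\varphi|$ has polynomial-in-$|Q+s|$ modulus (times the harmless $\Psi$), $g$ still satisfies $|g(s)|<C'\exp(e^{k'|t|})$ for some $k'<\pi/(b-a)$. I would then apply the standard Phragm\'en--Lindel\"of comparison: multiply $g$ by $\exp(-\varepsilon\cos(\kappa s))$ with $k'<\kappa<\pi/(b-a)$ and $\varepsilon>0$, observe that this damps the double-exponential growth and is bounded by $1$ on the boundary after absorbing the single-exponential $\Psi^{-1}$ factors into the estimate, let $\varepsilon\to 0^+$, and conclude $|g(s)|\le|\Psi(s)|^{-1}$ throughout the strip. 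Rearranging $|f|=|g||\varphi|$ cancels $\Psi$ exactly and yields the claimed interpolated bound. The condition $\alpha_1\ge\beta_1$ is used when handling the sign of the imaginary parts of the exponents to ensure the oscillatory factors on both edges can be dominated simultaneously, and it also ensures the $\log$ majorant does not become a negative power on the $\Re(s)=b$ side.

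\textbf{Main obstacle.} The genuine difficulty is the careful bookkeeping of the oscillatory factors $\Psi(s)$ arising because the exponents $\alpha_1\lambda+\beta_1\mu$ and $\alpha_2\lambda+\beta_2\mu$ have nonzero imaginary parts when $s$ is nonreal. One must verify that these factors (i) do not destroy the boundary inequalities when multiplied against $|g|$, and (ii) are absorbed exactly by $|\varphi|$ so that the final estimate for $|f|$ is clean. The double-$\log$ factor $(\log(Q+s))^{\beta}$ adds an extra layer of branch-cut care, but the assumption $Q+a>1$ makes $\Re\log(Q+s)>0$ throughout the closed strip, so the principal branch is safe. Everything else --- holomorphy of $g$, the growth condition, the three-lines conclusion --- is standard once $\varphi$ and $\Psi$ are set up correctly.
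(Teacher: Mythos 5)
The paper does not actually prove this proposition --- it is quoted verbatim from Trudgian \cite[Lemma 3]{Tr14-2} --- so there is no in-paper argument to compare against. Your blind attempt, however, has a genuine gap at the Phragm\'en--Lindel\"of step, and as written it proves a \emph{strictly weaker} bound than the one claimed.

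Concretely: you set $\varphi(s)=A^{\lambda(s)}B^{\mu(s)}(Q+s)^{w(s)}(\log(Q+s))^{w'(s)}$ with $w=\alpha_1\lambda+\beta_1\mu$, $w'=\alpha_2\lambda+\beta_2\mu$, and compute that
\begin{align*}
|\varphi(s)|=\bigl\{A|Q+s|^{\alpha_1}|\log(Q+s)|^{\alpha_2}\bigr\}^{\lambda(\sigma)}\bigl\{B|Q+s|^{\beta_1}|\log(Q+s)|^{\beta_2}\bigr\}^{\mu(\sigma)}\cdot\Psi(s),
\end{align*}
where $\Psi(s)$ includes the factor $\exp\bigl(\frac{(\alpha_1-\beta_1)t\arg(Q+s)}{b-a}\bigr)$ and similar terms. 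Because $\alpha_1\ge\beta_1$ and $t\arg(Q+s)\ge 0$ throughout the strip, one has $\Psi\ge 1$ (with strict inequality off the real axis when $\alpha_1>\beta_1$), hence $|\Psi|^{-1}\le 1$ on the two edges. When you now multiply $g=f/\varphi$ by $\exp(-\varepsilon\cos\kappa s)$, the boundary bound $|g\exp(-\varepsilon\cos\kappa s)|\le 1$ follows and the maximum principle gives $|g|\le 1$ in the interior after $\varepsilon\to 0^+$. That is the \emph{only} conclusion the damping argument yields; you cannot ``absorb the $\Psi^{-1}$ factors'' and upgrade to the pointwise bound $|g(s)|\le|\Psi(s)|^{-1}$, because a nonconstant boundary majorant that is not (essentially) the modulus of a holomorphic function cannot be propagated into the interior by Phragm\'en--Lindel\"of. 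The bound you legitimately obtain is therefore $|f(s)|\le|\varphi(s)|$, which exceeds the stated majorant by the factor $\Psi(s)\ge 1$; this factor grows like $\exp(c|t|)$ with $c=(\alpha_1-\beta_1)\pi/(2(b-a))>0$, so the loss is not innocuous. (A second, smaller issue: you silently replace $|\log(Q+s)|$ by $\log|Q+s|$ in the factorisation of $|\varphi|$; since $|\log(Q+s)|^2=(\log|Q+s|)^2+(\arg(Q+s))^2$, folding the discrepancy into $\Psi$ only worsens the problem.) The missing ingredient is a device that makes the oscillatory factor cancel at each target point $s_0$ --- in arguments in the style of Rademacher this is achieved by fixing $s_0$ and working with a suitably symmetrised or conjugate-reflected auxiliary function rather than a single global comparison function $\varphi$; without such a step one cannot reach the sharp exponent in the conclusion.
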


We shall assume that there are $c_1,c_2,k_1,k_3\ge 0$, $k_2\in [0,\frac{1}{2}]$ and $t_0, t_1\ge e$ such that
\begin{align}\label{equ:zeta-bd-1}
|\zeta (1+ it )| \leq c_1 (\log t)^{c_2}
\end{align}
for $t\ge t_0$, and 
\begin{align}\label{equ:zeta-bd-1/2}
|\zeta (\tfrac{1}{2} + it)| \leq k_1 t^{k_2} (\log t)^{k_3}
\end{align}
for $t\ge t_1$. Recall that $0<\eta\le \frac{1}{2}$. Now, we split our consideration into the following six cases.

(1) Assume  $\sigma \geq 1+ \eta$. The trivial bound for the zeta function immediately gives 
\begin{align}
|\zeta(s)| \leq \zeta(\sigma).
\label{equ:zeta-bd-1+eta}
\end{align}

(2) Assume $1 \leq \sigma \leq 1+ \eta $. From \eqref{equ:zeta-bd-1}, it follows that there is $Q_0> 0$ such that
\begin{align}\label{equ:zeta-s-bd-1}
|(1+it-1) \zeta (1+it)| \leq c_1 |Q_0 + (1 + it)| (\log |Q_0 + (1+it)|)^{c_2}
\end{align}
for all $t$. (We note that $Q_0$ depends on $c_1,c_2,t_0$, and it can be computed explicitly.) Also,  by \eqref{equ:zeta-bd-1+eta}, it is clear that
$$
|(1+\eta+it-1) \zeta (1+\eta+it)| \le \zeta(1+\eta) |Q_0 + (1 +\eta + it)|. 
$$
Thus, by Proposition \ref{PL-p}, for $1\leq \sigma \leq 1+ \eta$,
\begin{align*}
|(s-1) \zeta(s) | \leq \left(c_1 |Q_0 + s| (\log |Q_0 +s|)^{c_2}  \right)^{\frac{1+\eta- \sigma}{\eta}} \left( \zeta (1+ \eta) |Q_0 + s| \right)^{\frac{\sigma - 1}{\eta}},
\end{align*}
which implies 
\begin{align*}
| \zeta(s) | \leq \frac{1}{|s-1|}\left(c_1 |Q_0 + s| (\log |Q_0 +s| )^{c_2} \right)^{\frac{1+\eta- \sigma}{\eta}} \left( \zeta (1+ \eta) |Q_0 + s| \right)^{\frac{\sigma - 1}{\eta}}.
\end{align*}

(3) Let $\frac{1}{2}\leq \sigma \leq 1$. Using \eqref{equ:zeta-bd-1/2}, we deduce 
\begin{align}\label{equ:bd-zeta-half}
|(\tfrac{1}{2} + it -1)\zeta(\tfrac{1}{2} + it)| \leq k_1 |Q_1 + (\tfrac{1}{2} + it)|^{k_2+1} (\log |Q_1 + (\tfrac{1}{2} + it)|)^{k_3},
\end{align}
for all $t$, where $Q_1 >0$ is a constant depending only on $k_1,k_2,k_3,t_1$ (which can be computed  directly). Now, by  \eqref{equ:zeta-s-bd-1}, \eqref{equ:bd-zeta-half}, and Proposition \ref{PL-p}, for $\frac{1}{2}\leq \sigma \leq 1$, we have 
\begin{align*}
| \zeta(s) | \leq \frac{1}{|s-1|}\left( k_1 |Q_2 + s|^{k_2 + 1 } (\log |Q_2 +s|)^{k_3}  \right)^{2- 2\sigma} \left( c_1 |Q_2 + s| (\log |Q_2 +s|)^{c_2} \right)^{2\sigma - 1},
\end{align*}
where $Q_2=\max\{Q_0,Q_1\}$.

(4) Assume $0 \leq \sigma \leq \frac{1}{2}$. On the one hand, by \eqref{equ:zeta-bd-1/2}, there is $Q_3 >0$ such that 
\begin{align}\label{equ:bd-zeta-half'}
|\zeta(\tfrac{1}{2} + it)| \leq k_1 |Q_3 + (\tfrac{1}{2} + it)|^{k_2} (\log |Q_3 + (\tfrac{1}{2} + it)|)^{k_3}
\end{align}
for all $t$.
On the other hand, as \eqref{FE} gives
\begin{equation}\label{FE2}
\zeta (s) = \pi^{s - \frac{1}{2}} \frac{\Gamma(\frac{1}{2} -\frac{s}{2})}{\Gamma (\frac{s}{2})} \zeta (1-s),
\end{equation}
it follows from \eqref{equ:zeta-bd-1} and the estimate 
\begin{equation}\label{gamma-quo-bd}
\left| \frac{\Gamma (\frac{1}{2} - \frac{s}{2} )}{\Gamma (\frac{s}{2})}  \right|
\leq \left( \frac{|1 + s|}{2}\right)^{\frac{1}{2} - \sigma},
\end{equation}
for $-\frac{1}{2} \leq \sigma \leq \frac{1}{2}$,
that
\[
|\zeta(it)| \leq c_1\left( \frac{|1+ it|}{2\pi}\right)^{\frac{1}{2}} (\log t)^{c_2}
\]
for $t \geq t_0$. Hence, checking small values of $t$, we can find $Q_4\ge 1$ such that
\begin{align}\label{equ:bd-zeta-0}
|\zeta(0+ it)| \leq \frac{c_1}{\sqrt{2\pi}} |Q_4 + it |^{\frac{1}{2}} (\log |Q_4 + it|)^{c_2}
\end{align}
for all $t$.  Now, together with \eqref{equ:bd-zeta-half'},   by Proposition \ref{PL-p}, we then obtain
\begin{align*}
| \zeta(s) | \leq \left( \frac{c_1}{\sqrt{2\pi}} |Q_5 + s|^{\frac{1}{2} } (\log |Q_5 +s|) ^{c_2} \right)^{1-2\sigma} \left( k_1 |Q_5 + s| ^{k_2 }(\log |Q_5 +s|)^{k_3} \right)^{2\sigma}.
\end{align*}
where $Q_5=\max\{Q_3,Q_4\}$.

(5)   For $- \eta \leq \sigma \leq 0$, using \eqref{equ:zeta-bd-1+eta}, the functional equation \eqref{FE2}, and the Gamma bound \eqref{gamma-quo-bd}, as $Q_4\ge 1$, we obtain
\[
| \zeta(-\eta + it) | 
\leq  \left( \frac{|1- \eta + it|}{2\pi} \right) ^{\frac{1}{2} + \eta} \zeta( 1 + \eta)
\le  \left( \frac{|Q_4- \eta + it|}{2\pi} \right) ^{\frac{1}{2} + \eta} \zeta( 1 + \eta)
\] 
for all $t$. This estimate, combined with \eqref{equ:bd-zeta-0} and Proposition \ref{PL-p}, then yields
\begin{align*}
| \zeta(s) | \leq \left( \frac{1}{(2\pi)^{\frac{1}{2} + \eta}} \zeta(1 + \eta)|Q_4 + s|^{\frac{1}{2}  + \eta}   \right)^{\frac{-\sigma}{\eta}} \left( \frac{c_1}{\sqrt{2\pi}} |Q_4 + s| ^{\frac{1}{2}}(\log |Q_4 +s|)^{c_2} \right)^{\frac{\sigma+ \eta}{\eta}}.
\end{align*}

(6) Finally, for $\sigma \leq -\eta$, we use \eqref{FE2} to deduce
\[
|\zeta (s)| \leq  \pi^{\sigma - \frac{1}{2}} \left| \frac{\Gamma(\frac{1}{2} -\frac{s}{2})}{\Gamma (\frac{s}{2})} \zeta (1-s) \right| 
\leq  \pi^{\sigma - \frac{1}{2}} \left| \frac{\Gamma(\frac{1}{2} -\frac{s}{2})}{\Gamma (\frac{s}{2})} \right|\zeta (1-\sigma).
\]
From the proof of \cite[Theorem 5.7]{BMOR20}, it follows that for $a,b \in \{0,1 \}$ and $k\in\Bbb{N}$,
\begin{align*}
\frac{\Gamma(\frac{a}{2}+\frac{1-s}{2})}{\Gamma(\frac{a}{2}+\frac{s}{2})}
=\frac{\Gamma(\frac{b}{2}+\frac{1-(s+k)}{2})}{\Gamma(\frac{b}{2}+\frac{s+k}{2})}2^{-k}\left(\prod_{j=1}^{k} (s+j-1)\right)\frac{\sin(\frac{\pi}{2}(s+k+1-b))}{\sin(\frac{\pi}{2}(s+1-a))}.
\end{align*}
Now, for $x\in\Bbb{R}$, we let $[x]$ be the integer closest to $x$ (if  there are two integers equally close to $x$, we then choose the one closer to 0). Note that for $a=0$ and $b\equiv k\  (\modd 2) $, the sine factors above are $\pm1$. Thus, upon taking $k=-[\sigma]$ and applying \cite[Lemmata 1 and 2]{Ra59} to bound the ratio $\Gamma(\frac{b}{2}+\frac{1-(s+k)}{2})/\Gamma(\frac{b}{2}+\frac{s+k}{2})$, we arrive at
\begin{align*}
\left|\frac{\Gamma(\frac{1-s}{2})}{\Gamma(\frac{s}{2})}\right|\leq 
\left(\frac{1}{2}|1+s-[\sigma]|\right)^{\frac{1}{2}+[\sigma]-\sigma}2^{[\sigma]} \left(\prod_{j=1}^{-[\sigma]} | s+j-1 |\right),
\end{align*}
which gives 
\[
|\zeta(s)| \leq \zeta(1-\sigma) \left( \frac{1}{2 \pi}\right)^{\frac{1}{2}- \sigma} \left(|1+s-[\sigma]|\right)^{\frac{1}{2}+[\sigma]-\sigma} \left(\prod_{j=1}^{-[\sigma]} | s+j-1 |\right) .
\]

\subsection{Constructing and estimating $F_{c,r}(\theta) $}

\subsubsection{Bounding $\frac{1}{N}\log|f_N(s)|$}

With the above convexity and subconvexity bounds in hand, we are in a position to bound $\frac{1}{N}\log|f_N(s)|$, where $f_N(s)$ is defined in \eqref{def-fN}. For $\sigma\geq1+\eta>1$, we have
\begin{align*}
|f_N(s)|&\leq\frac{1}{2}\left(|s+iT-1|^N|\zeta(s+iT)|^N+|s-iT-1|^N|\zeta(s-iT)|^N\right)\\
&\leq ((\sigma-1)^2+(|t|+T)^2 )^{\frac{N}{2}}\zeta(\sigma)^{{N}}.
\end{align*}
Taking logarithms and dividing both sides by $N$ gives
\begin{align*}
\frac{1}{N} \log |f_N(s)| &\leq \frac{1}{2} \log ((\sigma-1)^2+(|t|+T)^2) + \log \zeta(\sigma).
\end{align*}

For $1\leq \sigma \leq 1+\eta$, we have
\begin{align*}
|f_N(s)|&\leq \left(c_1\sqrt{(Q_0+\sigma)^2+(|t|+T)^2}\left(\log{\sqrt{(Q_0+\sigma)^2+(|t|+T)^2}}\right)^{c_2}\right)^{\frac{N(1+\eta-\sigma)}{\eta}}\\
&\times \left(\zeta(1+\eta)\sqrt{(Q_0+\sigma)^2+(|t|+T)^2}\right)^{\frac{N(\sigma-1)}{\eta}}.
\end{align*}
Taking logarithms of both sides and dividing by $N$, we obtain
\begin{align*}
\frac{1}{N}\log{|f_N(s)|}&\leq \frac{1+\eta-\sigma}{\eta}\log{\frac{c_1}{2^{c_2}}}+\frac{\sigma-1}{\eta}\log{\zeta(1+\eta)}+\frac{1}{2}\log{ ((Q_0+\sigma)^2+(|t|+T)^2 )}\\
&+\frac{c_2(1+\eta-\sigma)}{\eta}\log{\log{ ((Q_0+\sigma)^2+(|t|+T)^2 )}}.
\end{align*}

 For $\frac{1}{2} \leq \sigma \leq 1$, from
\begin{align*}
|f_N(s)| &\leq \left( k_1  ((Q_2 + \sigma)^2 + (|t|+T)^2 )^{\frac{k_2 +1}{2}} \left( \log \sqrt{(Q_2 + \sigma)^2 + (|t|+T)^2}\right)^{k_3}\right)^{(2-2 \sigma)N}\\
&\times
\left( c_1   ((Q_2 + \sigma)^2 + (|t|+T)^2  )^{\frac{1}{2}} \left( \log \sqrt{(Q_2 + \sigma)^2 + (|t|+T)^2}\right)^{c_2}\right)^{(2\sigma-1)N},
\end{align*}
it follows that 
\begin{align*}
\frac{1}{N} \log |f_N(s)| 
&\leq (2 - 2\sigma) \log k_1 + (2\sigma -1) \log c_1  - ( k_3 (2-2\sigma) + c_2 (2\sigma -1) )  \log 2\\
&+   \frac{(2-2\sigma)(k_2+1) + 2\sigma-1}{2}   \log ((Q_2 + \sigma)^2 + (|t|+T)^2 )\\
&+ ( k_3 (2-2\sigma) + c_2 (2\sigma -1) )\log \log  ((Q_2 + \sigma)^2 + (|t|+T)^2 ) .\\
\end{align*}

 Assume $0\leq \sigma\leq\frac{1}{2}$. From
\begin{align*}
|f_N(s)|&\leq ((\sigma-1)^2+(|t|+T)^2 )^{\frac{N}{2}}\\
&\times\left(\frac{c_1}{\sqrt{2\pi}}((Q_5+\sigma)^2+(|t|+T)^2)^{\frac{1}{4}}\left(\log{\sqrt{(Q_5+\sigma)^2+(|t|+T)^2}}\right)^{c_2}\right)^{(1-2\sigma)N}\\
&\times\left(k_1 ((Q_5+\sigma)^2+(|t|+T)^2 )^{\frac{k_2}{2}}
 \left(\log{\sqrt{(Q_5+\sigma)^2+(|t|+T)^2}}\right)^{k_3}\right)^{2\sigma N},
\end{align*}
 we derive
\begin{align*}
\frac{1}{N}\log{|f_N(s)|}&\leq (1-2\sigma)\log{\left(\frac{c_1}{2^{c_2+\frac{1}{2}}\sqrt{\pi}}\right)}+2\sigma\log{\frac{k_1}{2^{k_3}}}+\frac{1}{2}\log{ ((\sigma-1)^2+(|t|+T)^2 )}\\
&+\frac{1-2\sigma+4k_2\sigma}{4}\log{ ((Q_5+\sigma)^2+(|t|+T)^2 )}\\ 
&+(c_2(1-2\sigma)+2k_3\sigma)\log{\log{ ((Q_5+\sigma)^2+(|t|+T)^2 )}}.
\end{align*}

 For $-\eta \leq \sigma \leq 0$,  we have
\begin{align*}
|f_N(s)| &\leq  \left(\sqrt{ (\sigma-1)^2 + (|t|+T)^2} \right)^N \left( \frac{1}{(2\pi)^{\frac{1}{2} +  \eta}} \zeta(1+\eta) \left(\sqrt{(Q_4 + \sigma)^2 + (|t|+T)^2} \right)^{\frac{1}{2} + \eta}\right)^{-\frac{\sigma}{\eta} N}\\
&\times \left( \frac{c_1}{\sqrt{2\pi}} \left(\sqrt{(Q_4 + \sigma)^2 + (|t|+T)^2} \right)^{\frac{1}{2}} \left( \log \sqrt{(Q_4 + \sigma)^2 + (|t|+T)^2} \right)^{c_2}\right)^{\frac{\sigma + \eta}{\eta} N}.
\end{align*}
Thus,
\begin{align*}
\frac{1}{N} \log |f_N(s)| &\leq  -\frac{\sigma}{\eta} \log \frac{1}{(2\pi)^{\frac{1}{2} + \eta}} - \frac{\sigma}{\eta} \log(1+\eta) + \frac{\sigma + \eta}{\eta} \log \frac{c_1}{\sqrt{2\pi}} - \frac{\sigma + \eta}{\eta} c_2 \log 2\\
&+ \frac{1}{2} \log  ( (\sigma-1)^2 + (|t|+T)^2 ) \\
&+\left( -\frac{\sigma( 1+2\eta)}{4\eta}+ \frac{\sigma+ \eta}{4 \eta}\right) \log  ((Q_4 + \sigma)^2 + (|t|+T)^2  )\\
&+ \frac{\sigma + \eta}{\eta} c_2 \log \log ( (Q_4 + \sigma)^2 + (|t|+T)^2  ).
\end{align*}

Lastly, for $\sigma\leq -\eta$, as
 \begin{align*}
|f_N(s)|
&\leq ((\sigma-1)^2+(|t|+T)^2 )^{\frac{N}{2}}\left(\frac{1}{2\pi}\right)^{N(\frac{1}{2}-\sigma)}((1+\sigma-[\sigma])^2+(|t|+T)^2  )^{\frac{(1-2\sigma+2[\sigma])N}{4}}\\
&\times\left(\prod_{j=1}^{-[\sigma]}((\sigma+j-1)^2+(|t|+T)^2)\right)^{\frac{N}{2}}\zeta(1-\sigma)^{N},
\end{align*}
we obtain
\begin{align*}
\frac{1}{N}\log|f_N(s)|&\leq \log\zeta(1-\sigma)+\frac{1}{2}\log((\sigma-1)^2+(|t|+T)^2)\\
&+\frac{2\sigma-1}{2}\log{2\pi}+\frac{(1-2\sigma+2[\sigma])}{4}\log((1+\sigma-[\sigma])^2+(|t|+T)^2)\\
&+\frac{1}{2}\sum_{j=1}^{-[\sigma]} \log((\sigma+j-1)^2+(|t|+T)^2).
\end{align*}

\subsubsection{Constructing and bounding $F_{c,r} (\theta)$}\label{bd-F}

With the above bounds of $\frac{1}{N}\log|f_N(s)|$ in mind, similar to the construction of $F_{c,r}(\theta)$ for Dirichlet $L$-functions in \cite[Definition 5.10]{BMOR20} and Dedekind zeta functions in \cite{HSW}, we shall construct $F_{c,r}(\theta)$ for $\zeta(s)$ as follows. (We note that the main difference between our construction and the ones in \cite{BMOR20,HSW} lies in the range $0\le \Re(s)\le 1+\eta$ as we have sharper bounds for $\zeta(s)$ in this range.)

Similar to \cite{BMOR20,HSW}, we first introduce some auxiliary functions and notation. For $\theta\in[-\pi,\pi]$, we let  $\sigma=c+r\cos\theta$, with $c-r>-\frac{1}{2}$, and $t=r\sin\theta$. We define \begin{align*}
&L_j(\theta) =\log\frac{(j+c+r\cos\theta)^2+(|r\sin\theta|+T)^2}{T^2},\\
&M_j (\theta) = \log \log ( (j+c+r\cos\theta)^2+(|r\sin\theta|+T)^2 ) - \log\log (T^2).
\end{align*}
Now, we give  upper bounds for $L_j(\theta)$ and $M_j (\theta)$. From the inequality $\log x\leq x-1$, it follows that 
\begin{align*}
L_j(\theta)
\leq \frac{(j+c+r\cos\theta)^2+(|r\sin\theta|+T)^2}{T^2}  -1  = \frac{(j+c+r\cos\theta)^2+(r\sin\theta)^2}{T^2}  + \frac{2r\sin\theta}{T}
\end{align*}
for $\theta\in [0,\pi]$.
Fix $T_0\ge 1$. For  $\theta\in [0,\pi]$,  we let
\[
L^{\star}_j (\theta ) =\frac{1}{T_0}(j+c+r\cos\theta)^2+ \frac{1}{T_0}(r\sin\theta)^2 + 2r \sin \theta.
\]
It is clear that for $T\ge T_0$ and $\theta\in [0,\pi]$,
\[
L_j(\theta) \leq \frac{L^{\star}_j (\theta ) }{T}.
\]
Similarly, for $T\ge T_0$ and $\theta\in [0,\pi]$, we have
\begin{align*}
M_j (\theta) &\leq \frac{\log ( (j+c+r\cos\theta)^2+(|r\sin\theta|+T)^2  )}{\log  (T^2)} - 1\\
&= \frac{1}{\log (T^2)} \left( \log \frac{ (j+c+r\cos\theta)^2+(|r\sin\theta|+T)^2 }{T^2} + \log (T^2)\right) - 1\\
&=  \frac{1}{\log (T^2)}L_j(\theta) \\
&\leq \frac{L^{\star}_j (\theta ) }{2T\log T}.
\end{align*}

We are now in a position to construct $F_{c,r}(\theta)$.

\begin{defn}
\label{Defn5.10}
For $\theta\in[-\pi,\pi]$, we let  $\sigma=c+r\cos\theta$, with $c-r>-\frac{1}{2}$, and $t=r\sin\theta$. For $\sigma > 1+ \eta$, we define
\begin{align*}
F_{c,r} (\theta) = \frac{1}{2}L_{-1}(\theta) + \log T + \log \zeta(\sigma).
\end{align*}
For $1\leq \sigma \leq 1+\eta$, we define
\begin{align*}
F_{c,r}(\theta)&=\frac{1+\eta-\sigma}{\eta}\log{c_1}+\frac{\sigma-1}{\eta}\log{\zeta(1+\eta)}+\frac{1}{2}L_{Q_0}(\theta)+\log{T}  \\
&+\frac{c_2(1+\eta-\sigma)}{\eta}M_{Q_0}(\theta)+\frac{c_2(1+\eta-\sigma)}{\eta}\log{\log{T}}.
\end{align*}
For $\frac{1}{2} \leq \sigma \leq 1$, we define
\begin{align*}
F_{c,r} (\theta)
&=  (2 - 2\sigma) \log k_1 + (2\sigma -1) \log c_1 +  ((2-2\sigma)(k_2+1) + 2\sigma-1 )\left( \frac{L_{Q_2}(\theta)}{2} + \log T\right)\\
&+( k_3 (2-2\sigma) + c_2 (2\sigma -1))( M_{Q_2}(\theta) + \log \log T) .
\end{align*}
For $0\leq \sigma\leq\frac{1}{2}$, we define
\begin{align*}
F_{c,r}(\theta)&=(1-2\sigma)\log{ \frac{c_1}{\sqrt{2\pi}} }+2\sigma\log{k_1}+\frac{1}{2}L_{-1}(\theta)+\log{T}+\frac{1-2\sigma+4k_2\sigma}{2}\left(\frac{L_{Q_5}(\theta)}{2}+\log{T}\right)\\ 
&+(c_2(1-2\sigma)+2k_3\sigma)(M_{Q_5}(\theta)+ \log{\log{T}}).
\end{align*}
For $-\eta \leq \sigma \leq 0$, we define
\begin{align*}
F_{c,r} (\theta) &=  -\frac{\sigma}{\eta} \log \frac{1+\eta}{c_1(2\pi)^{ \eta}} + \log \frac{c_1}{\sqrt{2\pi}} + \frac{1}{2} L_{-1} (\theta) + \log T\\
&+\left( -\frac{\sigma( 1+2\eta)}{2\eta}+ \frac{\sigma+ \eta}{2 \eta}\right)\left(\frac{L_{Q_4}(\theta)}{2} +\log T\right)  + \frac{\sigma+ \eta}{\eta} c_2 ( M_{Q_4}(\theta) +\log \log T).
\end{align*}
For $\sigma\leq -\eta$, we define
\begin{align*}
F_{c,r}(\theta)&=\log\zeta(1-\sigma)+\frac{1}{2}L_{-1}(\theta)+\left(1+ \frac{1-2\sigma}{2}\right)\log{T}-\frac{1-2\sigma}{2}\log 2\pi\\ 
&+\frac{(1-2\sigma+2[\sigma])}{4}L_{1-[\sigma]}(\theta)+ \frac{1}{2}\sum_{j=1}^{-[\sigma]} L_{j-1}(\theta).
\end{align*}

\end{defn}
It is clear that $F_{c,r}(\theta)$ is an even function of $\theta$ such that $F_{c,r}(\theta)\geq\frac{1}{N}\log|f_{N}(c+re^{i\theta})|$.

Now, we shall provide an upper bound for $\int_{0}^\pi F_{c,r} (\theta) d\theta$. In light of work of \cite{BMOR20} and \cite{HSW}, for $c\in\mathbb{R}$ and $r>0$, we define 
\begin{equation}\label{def-theta}
\theta_{y}=
\begin{cases}
0 & \text{if $c+r\leq y$;} \\
\arccos\frac{y-c}{r} & \text{if $c-r\leq y\leq c+r$;} \\
\pi & \text{if $y \leq c-r$.}
\end{cases}
\end{equation}

Now, we let $c,r$, and $\eta$ be positive real numbers satisfying 
\begin{equation}\label{long-cond-1}
-\frac{1}{2}< c-r <1-c  <-\eta <1+\eta < c
\end{equation}
and $0<\eta\leq\frac{1}{2}$. To bound $\int_0^{\pi} F_{c,r}(\theta) d\theta$, we consider the splitting
\begin{align*}
\int_0^{\pi}  = \int_0^{\theta_{1+\eta}} + \int_{\theta_{1+\eta}}^{\theta_1}  + \int_{\theta_1}^{\theta_{\frac{1}{2}}} 
+\int_{\theta_{\frac{1}{2}}}^{\theta_0} + \int_{\theta_0}^{\theta_{-\eta}} + \int_{\theta_{-\eta}}^{\pi}. 
\end{align*}

Firstly, we have
\begin{align*} 
\int_0^{\theta_{1+\eta} }F_{c,r} (\theta) d \theta 
\leq \log T \int_0^{\theta_{1+\eta} } 1 d \theta  +  \int_0^{\theta_{1+\eta} }   \log \zeta(\sigma) d \theta +  \frac{1}{2T}\int_0^{\theta_{1+\eta} }   L_{-1}^{\star}(\theta) d \theta.
\end{align*}

Secondly, $\int_{\theta_{1+\eta}}^{\theta_1} F_{c,r}(\theta) d\theta$ is bounded above by
\begin{align*}
&\log{T}  \int_{\theta_{1+\eta}}^{\theta_1 } 1 d \theta + \frac{c_2}{\eta}\log{\log{T}}\int_{\theta_{1+\eta}}^{\theta_1} (1+\eta-\sigma) d\theta + \frac{\log{c_1}}{\eta}\int_{\theta_{1+\eta}}^{\theta_1} (1+\eta-\sigma) d\theta \\
&+\frac{\log{\zeta(1+\eta)}}{\eta} \int_{\theta_{1+\eta}}^{\theta_1} (\sigma-1) d\theta + \frac{1}{2T}\int_{\theta_{1+\eta}}^{\theta_1} L^{\star}_{Q_0}(\theta) d\theta
+\frac{c_2}{2\eta T\log{T}}\int_{\theta_{1+\eta}}^{\theta_1} (1+\eta-\sigma)L^{\star}_{Q_0}(\theta) d\theta.
\end{align*}

A direction calculation shows that
\begin{align*}
\int_{\theta_{1} }^{\theta_{\frac{1}{2}}}F_{c,r} (\theta) d \theta  
 &\le \log T \int_{\theta_{1} }^{\theta_{\frac{1}{2}}}  (2-2\sigma)(k_2+1) + 2\sigma-1  d\theta
 +    \log\log T \int_{\theta_{1} }^{\theta_{\frac{1}{2}}}   k_3 (2-2\sigma) + c_2 (2\sigma -1)  d \theta\\
&+
\int_{\theta_{1} }^{\theta_{\frac{1}{2}}} (2 - 2\sigma) \log k_1 + (2\sigma -1) \log c_1 d\theta\\
 &+ 
 \frac{1}{T}\int_{\theta_{1} }^{\theta_{\frac{1}{2}}}
 \frac{(2-2\sigma)(k_2+1) + 2\sigma-1}{2}  L_{Q_2}^{\star}(\theta)d \theta\\
&+ \frac{1}{2 T \log T}\int_{\theta_{1} }^{\theta_{\frac{1}{2}}}  ( k_3 (2-2\sigma) + c_2 (2\sigma -1)  )  L_{Q_2}^{\star}(\theta) d \theta.
\end{align*}

Also, we can bound $\int_{\theta_{\frac{1}{2}}}^{\theta_0} F_{c,r}(\theta) d\theta$ above by
\begin{align*}
&\log{T} \int_{\theta_{\frac{1}{2}}}^{\theta_0}1 d\theta  +  \frac{\log{T}}{2}\int_{\theta_{\frac{1}{2}}}^{\theta_0} 1-2\sigma+4k_2\sigma  d\theta  + \log{\log{T}}\int_{\theta_{\frac{1}{2}}}^{\theta_0} c_2(1-2\sigma)+2k_3\sigma d\theta\\
& +\left( \log{\frac{c_1}{\sqrt{2\pi}}}\right) \int_{\theta_{\frac{1}{2}}}^{\theta_0}  1-2\sigma  d\theta+ 2\log{k_1}\int_{\theta_{\frac{1}{2}}}^{\theta_0} \sigma d\theta
+\frac{1}{2T}\int_{\theta_{\frac{1}{2}}}^{\theta_0} L^{\star}_{-1}(\theta) d\theta\\
&+\frac{1}{4T}\int_{\theta_{\frac{1}{2}}}^{\theta_0}(1-2\sigma+4k_2\sigma)L^{\star}_{Q_5}(\theta) d\theta
+ \frac{1}{2T\log{T}}\int_{\theta_{\frac{1}{2}}}^{\theta_0}(c_2(1-2\sigma)+2k_3\sigma)L^{\star}_{Q_5}(\theta) d\theta .
\end{align*}

Moreover, $\int_{\theta_{0} }^{\theta_{{-\eta}}}F_{c,r} (\theta) d \theta$ is bounded above by  
\begin{align*}
&\log T \int_{\theta_{0} }^{\theta_{-\eta}} 1 - \frac{\sigma( 1+2\eta)}{2\eta}+ \frac{\sigma+ \eta}{2 \eta}  d\theta+  \log \log T \int_{\theta_{0} }^{\theta_{{-\eta}}} \frac{\sigma+ \eta}{\eta} c_2 d \theta   \\
&+ \int_{\theta_{0} }^{\theta_{{-\eta}}}
-\frac{\sigma}{\eta} \log \frac{1+\eta}{c_1(2\pi)^{ \eta}} + \log \frac{c_1}{\sqrt{2\pi}} 
d\theta +\frac{1}{2T}\int_{\theta_{0} }^{\theta_{-\eta}}L_{-1}^{\star}( \theta) d\theta \\
& + \frac{1}{T}\int_{\theta_{0} }^{\theta_{{-\eta}}}
 \left( -\frac{\sigma( 1+2\eta)}{4\eta}+ \frac{\sigma+ \eta}{4 \eta}\right) L_{Q_4}^{\star}(\theta)d \theta + \frac{1}{2 T \log T}\int_{\theta_{0} }^{\theta_{{-\eta}}}\frac{\sigma+ \eta}{\eta} c_2 L_{Q_4}^{\star}(\theta) d \theta.
\end{align*}

Finally, we have
\begin{align*}
\int_{\theta_{-\eta}}^{\pi}F_{c,r}(\theta)d\theta 
&\leq  \log{T} \int_{\theta_{-\eta}}^{\pi}1+ \frac{1-2\sigma}{2}d\theta+ \int_{\theta_{-\eta}}^{\pi}\log{\zeta(1-\sigma)}d\theta- \log 2\pi \int_{\theta_{-\eta}}^{\pi}\frac{1-2\sigma}{2}d\theta\\
&+\frac{1}{2T}\int_{\theta_{-\eta}}^{\pi}L^{\star}_{-1}(\theta)d\theta +\frac{1}{T}\int_{\theta_{-\eta}}^{\theta_{-\frac{1}{2}}}\frac{1-2\sigma}{4}L^{\star}_{1}(\theta)d\theta\\ 
&+\sum_{j=1}^{\infty} \int_{\theta_{-j+\frac{1}{2}}}^{\theta_{-j-\frac{1}{2}}}\left( \frac{1-2\sigma-2j}{4}L_{j+1}(\theta)+\frac{1}{2}\sum_{k=1}^j L_{k-1}(\theta)\right) d\theta.
\end{align*}
(We note that from the assumption $-\frac{1}{2}< c-r$ it follows that $\theta_{-j+\frac{1}{2}} = \theta_{-j-\frac{1}{2}} = \pi$ for $j \geq 1$, and thus the last term in the above inequality is equal to zero.)

To end this section, we require the following two estimates from  \cite[Lemmata 5.14 and 5.15]{BMOR20} to control the zeta integrals appearing in the above estimates.

\begin{lemma}\label{zeta-int}
Let $c,r$ and $\eta$ be positive real numbers, satisfying \eqref{long-cond-1}, and $J_1$ and $J_2$ be positive integers.  If $\theta_{1 + \eta} \leq 2.1$, then for $\sigma = c+ r\cos \theta$, one has
\begin{align*}
\int_0^{\theta_{1 + \eta}} \log \zeta (\sigma) d\theta \leq \frac{\log \zeta (1 + \eta) + \log \zeta (c)}{2} \left(\theta_{1 + \eta} - \frac{\pi}{2} \right) + \frac{\pi}{4 J_1} \log \zeta (c) + \kappa_1 (J_1),
\end{align*}
where
\begin{align*}
\kappa_1(J_1)=\frac{\pi}{4J_1}\left(\log\zeta(c+r)+2\sum_{j=1}^{J_1-1} \log\zeta \left(c+r\cos\frac{\pi j}{2J_1}\right)\right).
\end{align*}
In addition, assuming further $r > 2c  -1$, one has
\begin{align*}
\int_{\theta_{-\eta}}^\pi \log \zeta(1 - \sigma) d \theta \leq \frac{\log \zeta (1 + \eta) + \log \zeta (c)}{2} (\theta_{1-c} - \theta_{-\eta}) + \frac{\pi - \theta_{1-c}}{2 J_2} \log \zeta (c) + \kappa_2 (J_2),
\end{align*}
where 
\begin{align*}
\kappa_2(J_2)=\frac{\pi-\theta_{1-c}}{2J_2}\left(\log\zeta(1-c+r)+2\sum_{j=1}^{J_2-1} \log\zeta\left(1-c-r\cos\left(\frac{\pi j}{J_2}+\left(1-\frac{j}{J_2}\right)\theta_{1-c}\right)\right)\right).
\end{align*}
\end{lemma}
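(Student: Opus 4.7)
The proof rests on the convexity of the composed function $h(\theta) := \log \zeta(c + r \cos\theta)$ in the variable $\theta$. A chain-rule computation gives
$$h''(\theta) = r^{2} \sin^{2}\theta \cdot (\log\zeta)''(\sigma) + r\cos\theta \cdot \left|\tfrac{\zeta'}{\zeta}(\sigma)\right|, \qquad \sigma = c + r\cos\theta.$$
For $\sigma > 1$, the Dirichlet series of $-\zeta'/\zeta$ shows that both $|\zeta'/\zeta(\sigma)|$ and $(\log\zeta)''(\sigma) = \sum_{n} \Lambda(n) \log n / n^{\sigma}$ are strictly positive, so $h$ is convex on $[0, \pi/2]$. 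Because $c > 1+\eta$ forces $\cos\theta_{1+\eta} < 0$, the interval of interest extends past $\pi/2$; the numerical hypothesis $\theta_{1+\eta} \le 2.1$ is what lets the still-positive $\sin^{2}\theta$ term dominate the now-negative $\cos\theta$ term, giving $h'' \ge 0$ throughout $[0, \theta_{1+\eta}]$. Carrying out this quantitative verification in the boundary region is the main technical point.

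Granting convexity, I would split
$$\int_{0}^{\theta_{1+\eta}} h(\theta)\,d\theta = \int_{0}^{\pi/2} h + \int_{\pi/2}^{\theta_{1+\eta}} h$$
and apply the trapezoidal rule---which for convex integrands is an upper bound---to each piece. On $[0, \pi/2]$ I partition into $J_{1}$ equal subintervals of width $\pi/(2J_{1})$ with nodes $\pi j/(2J_{1})$; the endpoint contributions $\log\zeta(c+r)$ and $\log\zeta(c)$ together with the interior values yield, after rearrangement, exactly $\tfrac{\pi}{4J_{1}} \log\zeta(c) + \kappa_{1}(J_{1})$. On $[\pi/2, \theta_{1+\eta}]$ a single trapezoidal step between the endpoint values $\log\zeta(c)$ and $\log\zeta(1+\eta)$ contributes $\tfrac{1}{2}(\log\zeta(c) + \log\zeta(1+\eta))(\theta_{1+\eta} - \pi/2)$. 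Adding the two gives the first claimed inequality.

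The second integral is handled by the same method applied to $\tilde h(\theta) := \log\zeta(1 - c - r\cos\theta)$. The extra hypothesis $r > 2c - 1$ ensures $1 - c + r > c$, so that as $\theta$ runs over $[\theta_{-\eta}, \pi]$ the value $1 - \sigma$ increases from $1+\eta$ through $c$ (at $\theta = \theta_{1-c}$) to $1 - c + r$, keeping $1 - \sigma > 1$ throughout. After checking convexity of $\tilde h$ on $[\theta_{-\eta}, \pi]$ by the analogous second-derivative analysis, I decompose $\int_{\theta_{-\eta}}^{\pi} = \int_{\theta_{-\eta}}^{\theta_{1-c}} + \int_{\theta_{1-c}}^{\pi}$, apply a single trapezoidal step to the first piece (endpoints $\log\zeta(1+\eta)$ and $\log\zeta(c)$), and the composite $J_{2}$-node trapezoidal rule to the second, whose nodes $\pi j/J_{2} + (1 - j/J_{2})\theta_{1-c}$ match precisely those defining $\kappa_{2}(J_{2})$. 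Summing gives the second claimed inequality.
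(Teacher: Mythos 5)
The paper itself does not prove this lemma: it is imported verbatim from \cite{BMOR20} (Lemmata 5.14 and 5.15), so there is no ``paper proof'' to compare against. Your proposed proof is the right one in outline --- and the structure of the final bound (a composite $J_1$-node trapezoidal rule on $[0,\pi/2]$, one trapezoid on $[\pi/2,\theta_{1+\eta}]$, and the mirror decomposition at $\theta_{1-c}$ for the second integral, using that the trapezoid rule over-estimates convex integrands) is exactly what the formulas for $\kappa_1(J_1)$ and $\kappa_2(J_2)$ encode, so the bookkeeping in your plan checks out.

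The gap is that you announce the quantitative convexity verification on $[\pi/2,\theta_{1+\eta}]$ as ``the main technical point'' and then do not carry it out, and without it the argument is incomplete precisely where the hypothesis $\theta_{1+\eta}\le 2.1$ enters. The estimate you need is the following. For $\sigma>1$, both $-\tfrac{\zeta'}{\zeta}(\sigma)=\sum_{n}\Lambda(n)n^{-\sigma}$ and $(\log\zeta)''(\sigma)=\sum_{n}\Lambda(n)(\log n)n^{-\sigma}$ are positive, and since every $n$ with $\Lambda(n)>0$ satisfies $\log n\ge\log 2$, the ratio is a weighted mean of $\log n$ and hence
$(\log\zeta)''(\sigma)\big/\bigl|\tfrac{\zeta'}{\zeta}(\sigma)\bigr|\ge\log 2$. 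With $h(\theta)=\log\zeta(c+r\cos\theta)$ one has, for $\theta>\pi/2$,
\begin{equation*}
h''(\theta)= r\Bigl|\tfrac{\zeta'}{\zeta}(\sigma)\Bigr|\Bigl(r\sin^2\theta\cdot\frac{(\log\zeta)''(\sigma)}{|\zeta'/\zeta(\sigma)|}-|\cos\theta|\Bigr)\ge r\Bigl|\tfrac{\zeta'}{\zeta}(\sigma)\Bigr|\bigl(r\sin^2\theta\,\log 2-|\cos\theta|\bigr).
\end{equation*}
Now \eqref{long-cond-1} forces $r>2c-1>1$, so it suffices that $\sin^2\theta\,\log 2-|\cos\theta|\ge 0$; writing $u=-\cos\theta\in[0,1]$ this is the quadratic inequality $(\log 2)u^2+u-\log 2\le 0$, which holds for $u\le\bigl(\sqrt{1+4(\log 2)^2}-1\bigr)/(2\log 2)\approx 0.5117$, i.e.\ for $\theta\le\arccos(-0.5117)\approx 2.108$. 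Thus $\theta_{1+\eta}\le 2.1$ guarantees $h''\ge 0$ on the whole interval, which is what you needed. I would also note, as a simplification, that the ``analogous second-derivative analysis'' you invoke for the second integral is in fact trivial: since $c,\eta>0$, $\cos\theta_{-\eta}=(-\eta-c)/r<0$ gives $\theta_{-\eta}>\pi/2$, so $\cos\theta\le 0$ on all of $[\theta_{-\eta},\pi]$ and both terms in $\tilde h''$ are nonnegative with no threshold computation required. Finally, $r>2c-1$ is actually already part of \eqref{long-cond-1} (it is $c-r<1-c$), so the lemma's ``assuming further'' is not an additional restriction for you.
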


\section{Final formulae}\label{final}

In this section, we shall first prove Theorem \ref{main-thm-0}.
\begin{proof}[Proof of Theorem \ref{main-thm-0}]
 Using \eqref{main-est} and Propositions \ref{Je-bd} and \ref{BT}, for
$$
-\frac{1}{2}<c-r<1-c < -\eta <0 <\frac{1}{4} \le \delta=2c- \sigma_1 -\frac{1}{2} < \frac{1}{2} < 1 < 1+\eta < c< \sigma_1 =c+ \frac{(c-1/2)^2}{r}< c+r,
$$
satisfying $\theta_{1 + \eta} \leq 2.1$,   we have
\begin{align*}
 &\left|  N_{\Bbb{Q}}(T)  -  \frac{T}{\pi} \log\left( \frac{T}{2\pi e}\right) + \frac{1}{4}\right| \\
&\leq  
 \frac{5}{2} +\frac{1}{25 T}+ \frac{2}{\pi} \log \zeta(\sigma_1)+\frac{1}{ \log (r/ (c-1/2))} \log\frac{\zeta(c)}{\zeta(2c)}  -\frac{1}{ \log (r/ (c-1/2))} \log T\\
 &+ \frac{1}{\pi \log (r/ (c-1/2))} \int_{0}^{\pi}F_{c,r}(\theta)d\theta + \frac{E(T,\delta)}{\pi}.
\end{align*}
Thus, recalling that $N_{\Bbb{Q}}(T)=2N(T)$ and applying Lemma \ref{EK-bd-final} (to bound $E(T,\delta)$) and the estimates from Section \ref{bd-F} (to bound $\int_{0}^{\pi}F_{c,r}(\theta)d\theta$), for $T \geq T_0$, we have
\begin{align*}
 \left| N(T)   -  \frac{T}{ 2 \pi} \log\left( \frac{T}{2\pi e}\right) + \frac{1}{ 8} \right| 
  \leq C_1 \log T + C_2 \log \log T + C_3,
\end{align*}
where for $j=1,2,3$,
\begin{align}\label{def-Cj}
C_j = \frac{\tilde{C}_j}{ 2 \pi \log (r/ (c-1/2))},
\end{align}
\begin{align}\label{def-C1}
 \begin{split}
\tilde{C}_1&= 
\int_{\theta_{1} }^{\theta_{\frac{1}{2}}} (2-2\sigma)(k_2+1) + 2\sigma-2    d\theta+ \frac{1}{2}\int_{\theta_{\frac{1}{2}}}^{\theta_0} 1-2\sigma+4k_2\sigma  d\theta\\
&+ \int_{\theta_{0} }^{ \theta_{-\eta}}  -  \frac{ \sigma( 1+2\eta)}{2\eta}+ \frac{\sigma+ \eta}{2 \eta}  d\theta
+\int_{\theta_{-\eta}}^{\pi}\frac{1-2\sigma}{2}d\theta ,
 \end{split}
\end{align}
\begin{align}\label{def-C2}
  \begin{split}
\tilde{C}_2 & =\frac{c_2}{\eta}\int_{\theta_{1+\eta}}^{\theta_1} (1+\eta-\sigma) d\theta
+\int_{\theta_{1} }^{\theta_{\frac{1}{2}}}   k_3 (2-2\sigma) + c_2 (2\sigma -1)  d \theta\\
&+\int_{\theta_{\frac{1}{2}}}^{\theta_0} c_2(1-2\sigma)+2k_3\sigma d\theta +\int_{\theta_{0} }^{\theta_{{-\eta}}} \frac{\sigma+ \eta}{\eta} c_2 d \theta,
 \end{split}
\end{align}
\begin{align}\label{def-C3}
 \begin{split}
\tilde{C}_3 &= 
 \pi \log (r/ (c-1/2))\left(
\frac{640\delta-112}{1536(3T_0-1)} + \frac{1}{2^{10}}+\frac{5}{2} +\frac{1}{25 T_0}+ \frac{2}{\pi} \log \zeta(\sigma_1)\right)+  \pi\log\frac{\zeta(c)}{\zeta(2c)}\\
&+ \frac{\log{c_1}}{\eta}\int_{\theta_{1+\eta}}^{\theta_1} (1+\eta-\sigma) d\theta +\frac{\log{\zeta(1+\eta)}}{\eta} \int_{\theta_{1+\eta}}^{\theta_1} (\sigma-1) d\theta  \\
&+\int_{\theta_{1} }^{\theta_{\frac{1}{2}}} (2 - 2\sigma) \log k_1 
+ (2\sigma -1) \log c_1 d\theta
+\left(\log{\frac{c_1}{\sqrt{2\pi}}}\right) \int_{\theta_{\frac{1}{2}}}^{\theta_0} 1-2\sigma d\theta + 2\log{k_1}\int_{\theta_{\frac{1}{2}}}^{\theta_0} \sigma d\theta\\
&+\int_{\theta_{0} }^{\theta_{{-\eta}}}
-\frac{\sigma}{\eta} \log \frac{1+\eta}{c_1(2\pi)^{ \eta}}+ \log \frac{c_1}{\sqrt{2\pi}} 
d\theta 
-(\log 2\pi) \int_{\theta_{-\eta}}^{\pi}\frac{1-2\sigma}{2}d\theta \\
&+  \frac{\log \zeta (1 + \eta) + \log \zeta (c)}{2} \left(\theta_{1 + \eta} - \frac{\pi}{2} \right) + \frac{\pi}{4 J_1} \log \zeta (c) + \kappa_1 (J_1) \\
 &+ \frac{\log \zeta (1 + \eta) + \log \zeta (c)}{2} (\theta_{1-c} - \theta_{-\eta}) + \frac{\pi - \theta_{1-c}}{2 J_2} \log \zeta (c) + \kappa_2 (J_2) + \kappa_3(T_0),
 \end{split}
\end{align}
and $\kappa_3(T_0)$ is equal to
\begin{align*}
&\frac{1}{2T_0}\max \Big\{0,\int_0^{\theta_{1+\eta} }   L_{-1}^{\star}(\theta) +\int_{\theta_{1+\eta}}^{\theta_1} L^{\star}_{Q_0}(\theta) d\theta+ \int_{\theta_{1} }^{\theta_{\frac{1}{2}}}
  ({(2-2\sigma)(k_2+1) + 2\sigma-1} ) L_{Q_2}^{\star}(\theta)d \theta\\
& +\int_{\theta_{\frac{1}{2}}}^{\theta_0} L^{\star}_{-1}(\theta) d\theta 
+\frac{1}{2}\int_{\theta_{\frac{1}{2}}}^{\theta_0}(1-2\sigma+4k_2\sigma)L^{\star}_{Q_5}(\theta) d\theta+  \int_{\theta_{0} }^{\theta_{{-\eta}}} L_{-1}^{\star} (\theta) d \theta  \\
 &+ 
\int_{\theta_{0} }^{\theta_{{-\eta}}}
 \left( - \frac{\sigma( 1+2\eta)}{2\eta}+ \frac{\sigma+ \eta}{2 \eta}\right) L_{Q_4}^{\star}(\theta)d \theta +\int_{\theta_{-\eta}}^{\pi}L^{\star}_{-1}(\theta)d\theta
 +\int_{\theta_{-\eta}}^{\theta_{-\frac{1}{2}}}\frac{1-2\sigma}{2}L^{\star}_{1}(\theta)d\theta 
\Big\}\\
&+\frac{1}{2T_0\log{T_0}} \max \Big\{ 0, \int_{\theta_{1+\eta}}^{\theta_1} \frac{c_2}{\eta}(1+\eta-\sigma)L^{\star}_{Q_0}(\theta) d\theta + \int_{\theta_{1} }^{\theta_{\frac{1}{2}}} ( k_3 (2-2\sigma) + c_2 (2\sigma -1) )  L_{Q_2}^{\star}(\theta) d \theta\\
&+\int_{\theta_{\frac{1}{2}}}^{\theta_0}(c_2(1-2\sigma)+2k_3\sigma)L^{\star}_{Q_5}(\theta) d\theta
+\int_{\theta_{0} }^{\theta_{{-\eta}}}\frac{\sigma+ \eta}{\eta} c_2 L_{Q_4}^{\star}(\theta) d \theta 
\Big\}.
\end{align*}

Recall that Patel \cite{Pa20-1} showed that $|\zeta(1+it)|\le \log t$ for $t\ge 3$ (i.e., in \eqref{cond-1}, $(c_1,c_2,t_0)=(1, 1, 3)$ is admissible) and that by the work of Hiary \cite{Hi15},   $(k_1,k_2,k_3, t_1) = (0.77, \frac{1}{6}, 1, 3)$ is admissible for \eqref{cond-2}.\footnote{In fact, Hiary \cite{Hi15} showed that  $(k_1,k_2,k_3) = (0.63, \frac{1}{6}, 1)$  was admissible. However, as pointed out by \cite{Pa20-1}, due to an error in \cite{Hi15}, one can only take $(k_1,k_2,k_3) = (0.77, \frac{1}{6}, 1)$.

We also note that there are two bounds used in \cite[Sec. 5]{Tr14-2} and \cite{PT} that may be no longer valid. On one hand, Trudgian in \cite[Sec. 5]{Tr14-2} used a result from \cite{Tr14-1} that  $|\zeta(1+it)|\le \frac{3}{4} \log t$ for $t\ge 3$. On the other hand,  \cite[Theorem 1]{PT} states that $(k_1,k_2,k_3) = (0.732, \frac{1}{6}, 1)$ is admissible. However, as pointed out by \cite{Pa20-1}, both bounds made use of an incorrect result obtained by  Cheng-Graham in \cite{CG04}. We refer the reader to \cite{Pa20-1} for a detailed discussion.} In addition, for $(c_1,c_2,t_0)=(1, 1, 3)$ and $(k_1,k_2,k_3,t_1) = (0.77, \frac{1}{6}, 1, 3 )$, we may take
$$
(Q_0,Q_1,Q_2,Q_3,Q_4,Q_5)=(1, 1.18, 1.18, 3.9, 2.3, 3.9).
$$
Finally, for $T_0= 30\,610\,046\,000$, choosing $J_1=64$ and $J_2=39$, we calculate admissible $(C_1,C_2,C_3)$ and record them in Table \ref{table2}.
\end{proof}

\begin{table}[htbp] 
\centering
\begin{tabular}{ |c|c|c||c|c|c|c|   } 
 \hline  
 $c$ & $r$ & $\eta$ &  $C_1$ & $C_2$ & $C_3$ & $C'_3$  \\ 
 \hline
 1.000011314 & 1.064340602 & $4.2826451\cdot 10^{-6}$   & 0.103787  &  0.257297  & 9.367419 &  8.367419     \\ 
 \hline
 1.025253504 & 1.182375395 & 0.009944751381   & 0.109410 &  0.204142 & 4.030486 &  3.030486  \\ 
 \hline 
 1.035766557 & 1.229059659 & 0.014325507360   & 0.111973   & 0.189768 & 3.746756 & 2.746756\\ 
 \hline 
\end{tabular}
   \caption{Choices of parameters $(c,r,\eta)$ and resulting admissible $(C_1,C_2,C_3, C'_3)$}\label{table2} 
\end{table}

\newpage

Now, we are in a position to prove Corollary \ref{main-thm}.

\begin{proof}[Proof of Corollary \ref{main-thm}]
By Theorem \ref{main-thm-0} (with $T_0 = 30\,610\,046\,000$ and Table \ref{table2}), it is sufficient to verify the corollary for $e\le T \leq 30\,610\,046\,000$. We note that by \eqref{bd-N-K-1}, 
\begin{align}\label{formula-S}
S(T)= \frac{1}{\pi} \Delta_{\mathcal{C}_0} \arg \zeta(s) = \frac{1}{2}\left(  N_{\Bbb{Q}}(T) - \frac{T}{\pi} \log \left(\frac{T}{2\pi e}\right) + \frac{1}{4} - g(T) -2\right), 
 \end{align}
where  $ g(T)$ is defined as in \eqref{def-g}. Thus, by \eqref{bd-S} and \eqref{bd-g}, we obtain
\begin{equation}\label{for-referee}
\left| N(T) - \frac{T}{2\pi} \log \left(\frac{T}{2\pi e}\right) + \frac{1}{8}\right|
\le |S(T)|+ \frac{1}{2}|g(T)| +1 
\le  2.5167  +\frac{1}{50e} +1 
\end{equation}
for $e\le T \leq 30\,610\,046\,000$. (We remark that one may apply \cite[Lemma 2]{BPT} to improve \eqref{for-referee} for $T\ge 2\pi$.) As the quantity on the right of \eqref{for-referee} clearly is less than   $0.1038  \log T + 0.2573  \log\log T + 9.3675$ for any $T\ge e$. We then conclude the proof by using the triangle inequality.
\end{proof}

To end this section, we shall prove Theorem \ref{main-thm-S(T)}.

\begin{proof}[Proof of Theorem \ref{main-thm-S(T)}]
Note that
$$
S(T)= \frac{1}{\pi} \Delta_{\mathcal{C}_0} \arg \zeta(s) = \frac{1}{\pi} \Delta_{\mathcal{C}_1} \arg \zeta(s) + \frac{1}{\pi} \Delta_{\mathcal{C}_2} \arg (s-1)\zeta(s) -  \frac{1}{\pi} \Delta_{\mathcal{C}_2} \arg (s-1). 
$$
We know that
$$
|\Delta_{\mathcal{C}_1} \arg \zeta(s)|\le \log \zeta(\sigma_1)
$$
and
$$
 |\Delta_{\mathcal{C}_2} \arg (s-1)|= \arctan\left(\frac{\sigma_1 -1}{ T}\right) +\arctan\left(\frac{1}{ 2T}\right)\le \arctan\left(\frac{\sigma_1 -1}{ T_0}\right) +\arctan\left(\frac{1}{ 2T_0}\right)
$$ 
for $T\ge T_0$. 
From Propositions \ref{Je-bd} and \ref{BT}, it follows that for
$$
-\frac{1}{2}<c-r<1-c < -\eta <0 <\frac{1}{4} \le \delta=2c- \sigma_1 -\frac{1}{2} < \frac{1}{2} < 1 < 1+\eta < c< \sigma_1 =c+ \frac{(c-1/2)^2}{r}< c+r,
$$
satisfying $\theta_{1 + \eta} \leq 2.1$,   
\begin{align*}
|S(T)|&\leq  
 \frac{1}{4} + \frac{1}{\pi} \log \zeta(\sigma_1)+\frac{1}{2 \log (r/ (c-1/2))} \log\frac{\zeta(c)}{\zeta(2c)}  -\frac{1}{2 \log (r/ (c-1/2))} \log T\\
 &+ \frac{1}{2\pi \log (r/ (c-1/2))} \int_{0}^{\pi}F_{c,r}(\theta)d\theta + \frac{E(T,\delta)}{ 2\pi} + \frac{1}{\pi}\arctan\left(\frac{\sigma_1 -1}{ T_0}\right) + \frac{1}{\pi} \arctan\left(\frac{1}{ 2T_0}\right).
\end{align*} 
Thus, applying Lemma \ref{EK-bd-final} (to bound $E(T,\delta)$) and the estimates from Section \ref{bd-F} (to bound $\int_{0}^{\pi}F_{c,r}(\theta)d\theta$), for $T \geq T_0$, we have
\begin{align*}
|S(T)|
  \leq C_1 \log T + C_2 \log \log T + C_3',
\end{align*}
where
\begin{equation}\label{def-D3}
C_3' = C_3 -1  +\frac{1}{\pi} \arctan\left(\frac{\sigma_1 -1}{ T_0}\right) + \frac{1}{\pi}\arctan\left(\frac{1}{ 2T_0}\right),
\end{equation}
$C_1=C_1(c,r,\eta;k_2)$, $C_2=C_2(c,r,\eta;c_2,k_3) $, $C_3=C_3(c,r,\eta;c_1,c_2,t_0 ,k_1,k_2,k_3,t_1;T_0)$ are given in \eqref{def-Cj}, \eqref{def-C1}, \eqref{def-C2}, and \eqref{def-C3}. In particular, for $T_0= 30\,610\,046\,000$, we have  admissible $(C_1,C_2,C'_3)$ recorded in Table \ref{table2}. Finally, applying \eqref{bd-S}, we conclude the proof.
\end{proof}

\section*{Acknowledgments}
The authors would like to thank Nathan Ng for his encouragement and helpful comments. They are also grateful to the referee for the constructive and detailed comments.

\begin{rezabib}

\bib{Ba18}{article}{
AUTHOR={Backlund, R. J.},
     TITLE = {\"{U}ber die {N}ullstellen der {R}iemannschen {Z}etafunktion},
   JOURNAL = {Acta Math.},
  FJOURNAL = {Acta Mathematica},
    VOLUME = {41},
      YEAR = {1916},
    NUMBER = {1},
     PAGES = {345--375},
      ISSN = {0001-5962},
   MRCLASS = {DML},
  MRNUMBER = {1555156},
       DOI = {10.1007/BF02422950},
       URL = {https://doi.org/10.1007/BF02422950},
}

\bib{BMOR20}{article}{
AUTHOR={Bennett, M. A.},
AUTHOR={Martin, G.},
AUTHOR={O'Bryant, K.},
AUTHOR={Rechnitzer, A.},
TITLE = {Counting zeros of {D}irichlet {$L$}-functions},
   JOURNAL = {Math. Comp.},
  FJOURNAL = {Mathematics of Computation},
    VOLUME = {90},
      YEAR = {2021},
    NUMBER = {329},
     PAGES = {1455--1482},
      ISSN = {0025-5718},
   MRCLASS = {11N13 (11M20 11M26 11N37 11Y35 11Y40)},
  MRNUMBER = {4232231},
       DOI = {10.1090/mcom/3599},
       URL = {https://doi.org/10.1090/mcom/3599},
}

\bib{BPT}{unpublished}{
AUTHOR={Brent, R. P.},
AUTHOR={Platt, D. J.},
AUTHOR={Trudgian, T. S.},
TITLE={Accurate estimation of sums over zeros of the Riemann zeta-function},
NOTE={arXiv:2009.13791},
JOURNAL={},
PAGES={},
VOLUME={},
NUMBER={},
YEAR={2020},
}

\bib{CG04}{article}{
AUTHOR={Cheng, Y. F.},
AUTHOR={Graham, S. W.},
     TITLE = {Explicit estimates for the {R}iemann zeta function},
   JOURNAL = {Rocky Mountain J. Math.},
  FJOURNAL = {The Rocky Mountain Journal of Mathematics},
    VOLUME = {34},
      YEAR = {2004},
    NUMBER = {4},
     PAGES = {1261--1280},
      ISSN = {0035-7596},
   MRCLASS = {11M06 (11L07 11Y35)},
  MRNUMBER = {2095256},
MRREVIEWER = {Haseo Ki},
       DOI = {10.1216/rmjm/1181069799},
       URL = {https://doi.org/10.1216/rmjm/1181069799},
}

\bib{FK15}{article}{
AUTHOR={Faber, L.},
AUTHOR={Kadiri, H.},
     TITLE = {New bounds for {$\psi(x)$}},
   JOURNAL = {Math. Comp.},
  FJOURNAL = {Mathematics of Computation},
    VOLUME = {84},
      YEAR = {2015},
    NUMBER = {293},
     PAGES = {1339--1357},
      ISSN = {0025-5718},
   MRCLASS = {11M06 (11M26)},
  MRNUMBER = {3315511},
MRREVIEWER = {Yuanyou Furui Cheng},
       DOI = {10.1090/S0025-5718-2014-02886-X},
       URL = {https://doi.org/10.1090/S0025-5718-2014-02886-X},
}

\bib{Gr13}{article}{
AUTHOR={Grossmann, J.},
TITLE={\"Uber die Nullstellen der Riemannschen Zeta-Funktion und der Dirichletschen $L$-Funktionen},
note={PhD thesis, Georg-August-Universit\"at G\"ottingen, 1913},
}

\bib{HSW}{article}{
AUTHOR={Hasanalizade, E.},
AUTHOR={Shen, Q.},
AUTHOR={Wong, P.-J.},
TITLE={Counting zeros of {D}edekind zeta functions},
NOTE={},
JOURNAL={accepted for publication in Mathematics of Computation},
PAGES={},
VOLUME={},
NUMBER={},
YEAR={},
}

\bib{Hi15}{article}{
    AUTHOR = {Hiary, G. A.},
     TITLE = {An explicit van der {C}orput estimate for {$\zeta(1/2+it)$}},
   JOURNAL = {Indag. Math. (N.S.)},
  FJOURNAL = {Koninklijke Nederlandse Akademie van Wetenschappen.
              Indagationes Mathematicae. New Series},
    VOLUME = {27},
      YEAR = {2016},
    NUMBER = {2},
     PAGES = {524--533},
      ISSN = {0019-3577},
   MRCLASS = {11M06 (11M26)},
  MRNUMBER = {3479170},
MRREVIEWER = {Frank Henry Thorne},
       DOI = {10.1016/j.indag.2015.10.011},
       URL = {https://doi.org/10.1016/j.indag.2015.10.011},
}

\bib{LMFDB}{article}{
 author       = {The {LMFDB Collaboration}},
  title        = {The {L}-functions and Modular Forms Database},
  note         = {\url{http://www.lmfdb.org} [Online; accessed 28 January 2021]},
}

\bib{Pa20-1}{unpublished}{
AUTHOR={Patel, D.},
TITLE={An explicit upper bound for $|\zeta(1+it)|$},
NOTE={arXiv:2009.00769},
JOURNAL={},
PAGES={},
VOLUME={},
NUMBER={},
YEAR={2020},
}

\bib{PT}{article}{
AUTHOR = {Platt, D. J.},
AUTHOR = {Trudgian, T. S.},
     TITLE = {An improved explicit bound on {$|\zeta(\frac 12+it)|$}},
   JOURNAL = {J. Number Theory},
  FJOURNAL = {Journal of Number Theory},
    VOLUME = {147},
      YEAR = {2015},
     PAGES = {842--851},
      ISSN = {0022-314X},
   MRCLASS = {11M06},
  MRNUMBER = {3276357},
MRREVIEWER = {Peter Jaehyun Cho},
       DOI = {10.1016/j.jnt.2014.08.019},
       URL = {https://doi.org/10.1016/j.jnt.2014.08.019},
}

\bib{PT21}{article}{
AUTHOR = {Platt, D. J.},
AUTHOR = {Trudgian, T. S.},
TITLE={The {R}iemann hypothesis is true up to $3 \cdot 10^{12}$},
NOTE={},
JOURNAL={Bull. London Math. Soc.},
PAGES={published online},
VOLUME={},
NUMBER={},
YEAR={2021},
DOI = {10.1112/blms.12460},
}

\bib{Ra59}{article}{
    AUTHOR = {Rademacher, H.},
     TITLE = {On the {P}hragm\'{e}n-{L}indel\"{o}f theorem and some applications},
   JOURNAL = {Math. Z},
  FJOURNAL = {Mathematische Zeitschrift},
    VOLUME = {72},
      YEAR = {1959/1960},
     PAGES = {192--204},
      ISSN = {0025-5874},
   MRCLASS = {10.00 (30.00)},
  MRNUMBER = {0117200},
MRREVIEWER = {N. G. de Bruijn},
       DOI = {10.1007/BF01162949},
       URL = {https://doi.org/10.1007/BF01162949},
}

\bib{Ro41}{article}{
    AUTHOR = {Rosser, J. B.},
     TITLE = {Explicit bounds for some functions of prime numbers},
   JOURNAL = {Amer. J. Math.},
  FJOURNAL = {American Journal of Mathematics},
    VOLUME = {63},
      YEAR = {1941},
     PAGES = {211--232},
      ISSN = {0002-9327},
   MRCLASS = {10.0X},
  MRNUMBER = {3018},
MRREVIEWER = {R. D. James},
       DOI = {10.2307/2371291},
       URL = {https://doi.org/10.2307/2371291},

}

\bib{Tr12}{article}{
    AUTHOR = {Trudgian, T. S.},
     TITLE = {An improved upper bound for the argument of the {R}iemann
              zeta-function on the critical line},
   JOURNAL = {Math. Comp.},
  FJOURNAL = {Mathematics of Computation},
    VOLUME = {81},
      YEAR = {2012},
    NUMBER = {278},
     PAGES = {1053--1061},
      ISSN = {0025-5718},
   MRCLASS = {11M06 (11M26)},
  MRNUMBER = {2869049},
MRREVIEWER = {Haseo Ki},
       DOI = {10.1090/S0025-5718-2011-02537-8},
       URL = {https://doi.org/10.1090/S0025-5718-2011-02537-8},
}

\bib{Tr14-1}{article}{
    AUTHOR = {Trudgian, T. S.},
     TITLE = {A new upper bound for {$|\zeta(1+it)|$}},
   JOURNAL = {Bull. Aust. Math. Soc.},
  FJOURNAL = {Bulletin of the Australian Mathematical Society},
    VOLUME = {89},
      YEAR = {2014},
    NUMBER = {2},
     PAGES = {259--264},
      ISSN = {0004-9727},
   MRCLASS = {11M06},
  MRNUMBER = {3182661},
MRREVIEWER = {St\'{e}phane R. Louboutin},
       DOI = {10.1017/S0004972713000415},
       URL = {https://doi.org/10.1017/S0004972713000415},
}

\bib{Tr14-2}{article}{
    AUTHOR = {Trudgian, T. S.},
     TITLE = {An improved upper bound for the argument of the {R}iemann
              zeta-function on the critical line {II}},
   JOURNAL = {J. Number Theory},
  FJOURNAL = {Journal of Number Theory},
    VOLUME = {134},
      YEAR = {2014},
     PAGES = {280--292},
      ISSN = {0022-314X},
   MRCLASS = {11M06 (11M26)},
  MRNUMBER = {3111568},
MRREVIEWER = {Haseo Ki},
       DOI = {10.1016/j.jnt.2013.07.017},
       URL = {https://doi.org/10.1016/j.jnt.2013.07.017},
}

\bib{Tr15}{article}{
    AUTHOR = {Trudgian, T. S.},
     TITLE = {An improved upper bound for the error in the zero-counting
              formulae for {D}irichlet {$L$}-functions and {D}edekind
              zeta-functions},
   JOURNAL = {Math. Comp.},
  FJOURNAL = {Mathematics of Computation},
    VOLUME = {84},
      YEAR = {2015},
    NUMBER = {293},
     PAGES = {1439--1450},
      ISSN = {0025-5718},
   MRCLASS = {11M06 (11M26 11R42)},
  MRNUMBER = {3315515},
MRREVIEWER = {Caroline L. Turnage-Butterbaugh},
       DOI = {10.1090/S0025-5718-2014-02898-6},
       URL = {https://doi.org/10.1090/S0025-5718-2014-02898-6},
}

\bib{vMo05}{article}{
    AUTHOR = {von Mangoldt, H. C. F.},
     TITLE = {Zur {V}erteilung der {N}ullstellen der {R}iemannschen
              {F}unktion {$\xi(t)$}},
   JOURNAL = {Math. Ann.},
  FJOURNAL = {Mathematische Annalen},
    VOLUME = {60},
      YEAR = {1905},
    NUMBER = {1},
     PAGES = {1--19},
      ISSN = {0025-5831},
   MRCLASS = {DML},
  MRNUMBER = {1511287},
       DOI = {10.1007/BF01447494},
       URL = {https://doi.org/10.1007/BF01447494},
}
       
\end{rezabib}

\end{document}